\documentclass{amsart}
\usepackage[toc,page]{appendix}
\usepackage{amssymb,epsfig,amsxtra, amsmath}
\usepackage{amscd} 
\usepackage{xypic}
\usepackage[mathscr]{eucal}
\usepackage{a4}
\input{xy}
\xyoption{all}
\newtheorem{theorem}{\textbf{Theorem}}[section]
\newtheorem{proposition}[theorem]{\textbf{Proposition}}
\newtheorem{definition}[theorem]{\textbf{Definition}}
\newtheorem{lemma}[theorem]{\textbf{Lemma}}
\newtheorem{corollary}[theorem]{\textbf{Corollary}}
\newtheorem{remark}[theorem]{\textbf{Remark}}
\newtheorem{example}[theorem]{\textbf{Example}}
\newtheorem{definition-remark}{\textbf{Definition-Remark}}

\newtheorem{claim}[theorem]{\textbf{Claim}}

\def\ZZ{{\mathbb Z}}

\def\PP{{\mathbb P}}

\def\O{{\mathcal O}}
\def\N{{\mathcal N}}

\def\A{{\mathcal A}}

\def\U{{\mathcal U}}
\def\K{{\mathcal K}}
\def\Pic{\operatorname{Pic}}
\def\Sing{\operatorname{Sing}}
\def\deg{\operatorname{deg}}

\def\d{\delta}

\def\ag{\`a}

\def\p{\rm p}

\hyphenation{Se-ve-ri}
\hyphenation{Ci-li-ber-to}
\begin{document}

\title[Curves with $A_k$-singularities on $K3$ surfaces]{On the existence of curves with 
$A_k$-singularities on $K3$ surfaces}
\author{Concettina Galati }
\address{Dipartimento di Matematica\\
 Universit\ag\, della Calabria\\
via P. Bucci, cubo 31B\\
87036 Arcavacata di Rende (CS), Italy. }
\email{galati@mat.unical.it }
\author{Andreas Leopold Knutsen}
\address{Department of Mathematics, University of Bergen, Postboks 7800, 5020  
Bergen, Norway.}
\email{andreas.knutsen@math.uib.no}

\thanks{Both authors want to thank
the Department of Mathematics of the University of Calabria and the Department of Mathematics of the University
of Bergen for hospitality and for financial support. The first author was also supported by GNSAGA of INdAM
and by the PRIN 2008 'Geometria delle variet\ag\,  algebriche e dei loro spazi di moduli', co-financied by MIUR}

\subjclass{14B07, 14H10, 14J28}

\keywords{versal deformations, tacnodes, Severi varieties, $K3$ surfaces, $A_k$-singularities}

\date{August 1st, 2014}

\dedicatory{}

\commby{}


\begin{abstract}
Let $(S,H)$ be a general primitively polarized $K3$ surface. We prove the existence
of irreducible curves in $|\mathcal O_S(nH)|$ with $A_k$-singularities and corresponding to regular points of the 
equisingular deformation locus. Our result is optimal for $n=1$. As a corollary, we get the existence of 
irreducible curves in $|\mathcal O_S(nH)|$ of geometric genus $g\geq 1$ with a cusp and nodes
or a simple tacnode and nodes.  We obtain our result by studying the versal deformation
family of the $m$-tacnode.  Moreover, using results of Brill-Noether theory on curves of $K3$ surfaces,
we provide a regularity condition
for families of curves with only $A_k$-singularities in $|\mathcal O_S(nH)|.$   
\end{abstract}


\maketitle

\section{Introduction}\label{introduction}
Let $S$ be a complex smooth projective $K3$ surface and let $H$ be a globally generated line bundle 
of sectional genus ${\p}={\p}_a(H) \geq 2$  and such that $H$ is not divisible in $\Pic S$. The pair
$(S,H)$ is called a {\it primitively polarized $K3$ surface of genus ${\p}.$} 
It is well-known that the moduli space 
$\K_{{\p}}$ of primitively polarized $K3$ surfaces of genus ${\p}$
is non-empty, smooth and irreducible of dimension $19.$ 
Moreover, if $(S,H)\in \K_{{\p}}$ is a very general element
(meaning that it belongs to the complement of a countable union of Zariski closed proper subsets), then $\Pic S \cong \ZZ[H].$
If $(S,H)\in \K_{{\p}}$, we denote
by $\mathcal V_{nH,1^\delta}^S\subset |\mathcal O_S(nH)|=|nH|$ the so called Severi variety
of $\delta$-nodal curves, defined as the Zariski closure of the locus of
{\it irreducible} and reduced curves with exactly 
$\d$ nodes as singularities. More generally, we will denote by 
$\mathcal V_{nH,\small1^{d_2},\,2^{d_3},\ldots,\,{(m-1)}^{d_m}}^{S}$  the Zariski closure 
of the locus in $|nH|$ of reduced and irreducible curves with exactly $d_k$ singularities
of type $A_{k-1},$ for every $2\leq k\leq m$,  and no further singularities.
We recall that an $A_k$-singularity is a plane singularity of analytic equation $y^2-x^{k+1}.$
Every plane singularity of multiplicity $2$ is an $A_k$-singularity, for some 
$k.$ 

The Severi variety $\mathcal V_{nH,1^\delta}^S\subset |\mathcal O_S(nH)|$ is a 
well-behaved variety.  By \cite{tan}, we know that $\mathcal V_{nH,1^\delta}^S$
is smooth of the expected dimension at every point $[C]\in\mathcal V_{nH,1^\delta}^S$ corresponding to a $\d$-nodal curve,
i.e. the tangent space $T_{[C]}\mathcal V_{nH,1^\delta}^S$ has dimension $\dim(|nH|)-\delta$ 
for every $\d\leq\dim(|nH|)={\p}_a(nH)$. 
The existence of nodal curves of every allowed genus in the primitive linear system $|H|$ on a general primitively polarized K3 
surface has been proved first by Mumford, cf. \cite{MM}. Later Chen proved the non-emptiness of $\mathcal V_{nH,1^\delta}^S$ 
in the case $(S,H)$ is a general primitively polarized $K3$ surface, 
$n\geq 1$ and $\d\leq\dim(|nH|)={\p}_a(nH)$ \cite{C}. Chen's existence theorem is obtained by degeneration 
techniques. A very general primitively polarized $K3$ surface $S_t\subset\PP^{{\p}}$ of genus ${{\p}}$ 
is degenerated in $\PP^{{\p}}$ to the union of two rational normal scrolls $S_0=R_1\cup R_2,$ 
intersecting transversally along a smooth elliptic normal curve $E.$
Rational nodal curves on $S_t$ are obtained by deformation from suitable reduced curves 
$C_0=C^1\cup C^2\subset S_0$ having tacnodes at points of $E$ and nodes elsewhere. A key 
ingredient in the proof of Chen's theorem is the Caporaso-Harris description of the locus of $(m-1)$-nodal 
curves in the versal deformation space $\Delta_m$ of the 
$m$-tacnode (or $A_{2m-1}$-singularity). The question we ask in this paper is the following.

\begin{trivlist}
\item[\hspace\labelsep{\bf Main Problem.}]\sl
{\it With the notation above, assume that $C=C_1\cup C_2\subset R_1\cup R_2$
is any curve having an $m$-tacnode at a point $p$ of $E.$ Then, which kinds of curve singularities
on $S_t$ may be obtained by deforming the $m$-tacnode of $C$ at $p?$} 
\end{trivlist}

Theorem \ref{prop:defcusps}, which is to be considered the main result of this article, completely 
answers this question. It proves that, under suitable hypotheses,
 {\it the $m$-tacnode of $ C$ at $p$ deforms to $d_k$ singularities of type $A_{k-1},$ for every
$2\leq k\leq m$ and $d_k\geq 0$ such that $\sum_kd_k(k-1)=m-1.$} By trivial dimensional reasons, no further 
singularities on $S_t$ may be obtained by deforming the $m$-tacnode of $C\subset R_1\cup R_2.$
The result is a local result, obtained by studying the versal deformation
family of the $m$-tacnode, with the same approach as in \cite[Section 2.4]{ch}. In particular, the result holds for
any flat family $\mathcal X\to\Delta$ of regular surfaces, with smooth total space $\mathcal X$ and special fibre $\mathcal X_0=A\cup B$
having two irreducible components $A$ and $B$ intersecting transversally, and it can be applied to curves $C^\prime\subset\mathcal X_0$ with
several tacnodes on $E$ and any kind of singularities on $\mathcal X_0\setminus E$, cf. Corollary \ref{ingenerale} and Remark \ref{global-use}.
Section \ref{Smoothing tacnodes} is completely devoted to the proof of Theorem \ref{prop:defcusps}.
In Section 4, inspired by \cite{C}, we apply Theorem \ref{prop:defcusps}, more precisely Corollary \ref{ingenerale},  
to a family of $K3$ surfaces with  suitable central fibre fibre $\mathcal X_0=R_1\cup \tilde{R}_2$, 
by deforming curves $C_0=C^1\cup C^2\subset R_1\cup \tilde{R}_2$ ad hoc constructed, and we 
obtain the following result. 

\begin{theorem} \label{thm:maink3}
Let $(S,H)$ be a general primitively polarized $K3$ surface of genus ${\p}={\p}_a(H)=2l+\epsilon \geq 3,$
with $l\geq 1$ and $\epsilon=0,1.$
 Then, for every $n\geq 1$ and for every $(m-1)$-tuple of non-negative integers $d_2,\ldots,d_m$ satisfying
 \begin{equation}\label{m-1}
\sum_{k=2}^m(k-1)d_k=\left\{\begin{array}{l}
2n(l-1+\epsilon)+2-\epsilon, \,\mbox{if}\,\, (n,{\p})\neq (2,3),\,(2,4),\\
2n(l-1+\epsilon)+1-\epsilon,\,\mbox{if}\,\, (n,{\p})= (2,3),\,(2,4),\\
\end{array}\right.
 \end{equation}
there exist reduced irreducible curves $C$ in the linear system $|nH|$ on $S$ 
 such that:
 \begin{itemize}
 \item $C$ has $d_k$ singularities of type $A_{k-1},$ for every $k=3,\ldots,m,$
 and $\delta+d_2$ nodes, where $\delta=\dim(|nH|)-\sum_{k=2}^m(k-1)d_k,$  and no further singularities;
 \item  $C$ corresponds to a regular point of the equisingular deformation locus $ES(C).$ Equivalently, 
 $\dim(T_{[C]}ES(C))=0.$ 
 \end{itemize} 
 Finally, the singularities of $C$ may be smoothed independently. In particular, under the hypothesis \eqref{m-1}, 
 for any $d_k'\leq d_k$ and for any $\delta ' \leq \delta$,
 there exist curves $C$ in the linear system $|nH|$ on $S$ with $d'_k$ singularities of type $A_{k-1},$
for every $k=3,\ldots,m,$ and $\delta'+d_2'$ nodes as further singularities and corresponding to regular points of their equisingular 
 deformation locus.
\end{theorem}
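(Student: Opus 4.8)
The plan is to produce the curves $C$ as deformations, inside Chen's degeneration of $(S,H)$, of reducible curves carrying a single $m$-tacnode, and then to apply the local Theorem \ref{prop:defcusps}. So first I would recall Chen's construction from \cite{C}: with $S\subset\PP^{\p}$ the embedding given by $H$, one specializes $S=S_t$ in $\PP^{\p}$ to $S_0=R_1\cup R_2$, the transversal union of two rational normal scrolls meeting along a smooth elliptic normal curve $E=R_1\cap R_2$. This gives a flat family $\pi\colon\mathcal{S}\to\Delta$ over a disc with smooth total space $\mathcal{S}$ (hence with all fibres regular surfaces), smooth $K3$ general fibre $\mathcal{S}_t=S_t$, special fibre $\mathcal{S}_0=S_0$, and with $\mathcal{O}_{S_t}(nH)$ specializing to a line bundle on $S_0$ whose restrictions $D_i$ to $R_i$ are effective and satisfy $D_1|_E\equiv D_2|_E$. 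This is precisely the type of family to which Theorem \ref{prop:defcusps} applies.

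Second, using the known Picard groups of the scrolls $R_i$, I would construct, for every $n\ge 1$ and every admissible tuple $(d_k)$ satisfying \eqref{m-1}, a reduced curve $C_0=C^1\cup C^2\subset S_0$, with $C^i\in|D_i|$, such that $C^1$ and $C^2$ meet $E$ only at smooth points of the $C^i$, these points being nodes of $C_0$ except at one distinguished point $p\in E$ where each $C^i$ meets $E$ with multiplicity $m$, so that $C_0$ has an $m$-tacnode at $p$ with $m-1$ equal to the right-hand side of \eqref{m-1}; moreover $C^1,C^2$ are chosen with suitably many further nodes away from $E$, so that the nodes of $C_0$ not absorbed in the tacnode, together with those produced by the smoothing of $S_0$ along $E$, number exactly $\delta$. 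The genericity of $(S,H)$ provides the freedom to make the scrolls and the $C^i$ general enough; and it is exactly here that the exceptional pairs $(n,\p)=(2,3),(2,4)$ intervene, because for these the available linear systems on $R_1,R_2$ cannot achieve a contact of the full order $m$ with $E$, which forces the right-hand side of \eqref{m-1} to be decreased by one.

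Third, I would apply Theorem \ref{prop:defcusps} to $\pi\colon\mathcal{S}\to\Delta$ and to $C_0$ at its $m$-tacnode $p$. Since $\mathcal{S}$ is smooth and the $S_t$ are regular, the theorem yields, for each $(d_k)$ with $\sum_{k=2}^{m}(k-1)d_k=m-1$, a curve $C=C_t\in|nH|$ on $S_t$ whose $m$-tacnode has broken into $d_k$ singularities of type $A_{k-1}$ for $3\le k\le m$ and $d_2$ nodes, while the remaining singularities of $C$ are $\delta$ nodes; it also gives that $[C]$ is a regular point of $ES(C)$, i.e.\ $\dim T_{[C]}ES(C)=0$. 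This is numerically consistent, since the singularities of $C$ impose $\delta+\sum_{k=2}^m(k-1)d_k=\delta+(\dim|nH|-\delta)=\dim|nH|$ conditions, so that the expected dimension of $ES(C)$ is $0$. For the irreducibility of $C$ I would use a connectedness/monodromy argument as in \cite{C}: $C_0$ is connected along $E$ through $p$ and the remaining nodes on $E$, so a nontrivial decomposition of $C$ into effective divisors would specialize to such a decomposition of $C_0$ compatible with the $m$-tacnode at $p$, which is excluded by the choice of $D_1$ and $D_2$.

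Finally, the last assertion of the theorem follows formally: once $\dim T_{[C]}ES(C)=0$ and $\dim|nH|=\sum_i\mu_i$ (the sum of the Milnor numbers of the singularities of $C$), the natural morphism from a neighbourhood of $[C]$ in $|nH|$ to the product of the semiuniversal deformation bases of the singularities of $C$ --- all smooth, since the singularities are of type $A$ --- has bijective differential at $[C]$, hence is a local isomorphism. Under this isomorphism the loci in $|nH|$ with prescribed partial smoothings of the singularities correspond to products of the analogous loci in the individual bases; so simultaneously smoothing any chosen subset of the $A_{k-1}$-points and of the nodes yields curves in $|nH|$ with $d_k'\le d_k$ singularities of type $A_{k-1}$ and $\delta'+d_2'$ nodes, each again a regular point of its own equisingular locus. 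The main difficulty of the argument lies in the second step: producing a limit curve $C_0$ with exactly one $m$-tacnode of the prescribed order, with precisely the right number of residual nodes, and general enough for Theorem \ref{prop:defcusps} to be applicable; this rests on a careful analysis of the linear systems $|D_i|$ on the scrolls and of how curves in them can be made to have high contact with the elliptic curve $E$, and it is what pins down both the precise form of \eqref{m-1} and the exceptional behaviour at $(n,\p)=(2,3),(2,4)$.
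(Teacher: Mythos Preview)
Your overall strategy matches the paper's, but several steps are oversimplified in ways that would not go through as written.

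First, the total space of Chen's family $\mathcal S\to T$ is \emph{not} smooth: it has sixteen ordinary double points on $E$, the zeroes of a section of $T^1_R$. The paper blows these up and contracts onto $R_2$ to obtain a genuinely smooth family $\mathcal X\to T$ with central fibre $R_1\cup\tilde R_2$; only then does Theorem~\ref{prop:defcusps} apply. Your assertion that $\mathcal S$ is already smooth skips this.

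Second, and more seriously, Theorem~\ref{prop:defcusps} is stated for a curve $C=C_A\cup C_B$ with $C_A,C_B$ \emph{smooth} and meeting $E$ in a \emph{single} tacnode plus transverse points. For $n\geq 2$ the limit curve in the paper is $C^1\cup C^2$ with each $C^i$ a union of $n$ irreducible components, hence with many nodes on $R\setminus E$; and when $\p$ is even the curve also acquires $n-1$ additional simple tacnodes at the points $q_{2j-1}$, not just the one big $(2n(l-1)+3)$-tacnode at $r$. So Theorem~\ref{prop:defcusps} does not apply directly. The paper instead uses Corollary~\ref{ingenerale} at each tacnode, assembles a global map
\[
\Phi:H^0(\pi^*(C),\N_{\pi^*(C)|\mathcal X})\longrightarrow \bigoplus_{x\ \textrm{tacnode}}T^1_{\pi^*(C),x}\ \oplus\ \bigoplus_{y\neq q\ \textrm{node off }E}T^1_{\pi^*(C),y},
\]
proves $\ker\Phi=\{[\pi^*(C)]\}$ by B\'ezout, and then for $\p$ odd checks by a dimension count that the image is all of $T'$, while for $\p$ even a further argument is needed to show that the image projects surjectively onto each $H_{q_{2j-1}}$ and $H_r$, so that the simple tacnodes deform to single nodes simultaneously with the desired deformation of the large tacnode. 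Your sketch collapses all of this into a single invocation of Theorem~\ref{prop:defcusps}.

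Two smaller points: the reason $(n,\p)=(2,3),(2,4)$ are exceptional is not that high contact with $E$ is impossible, but that the construction requires line bundles $L_1\neq L_2$ on $E$ with $(n-1)(l-1+\varepsilon)L_1\sim(n-1)(l-1+\varepsilon)L_2$, which forces $L_1=L_2$ exactly in those two cases; the paper then falls back on Chen's curves from \cite[Section~3.2]{C}. And irreducibility of the deformed curve is immediate from $\Pic(\mathcal X_t)=\ZZ[H_t]$, so no monodromy argument is needed.
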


The notion of equisingular deformation locus and regularity is recalled in Definition \ref{definition-regularity} and Remark \ref{A_k}.
In Corollaries  \ref{H}  and \ref{divisori} we observe that Theorem \ref{thm:maink3} is optimal if $n=1$
and that, for $n\geq1,$ it implies the existence of curves of every geometric genus $g\geq 1$ with
a cusp and nodes or a $2$-tacnode and nodes as further singularities. By \cite{C1}, 
this is not possible for $(g,n)=(0,1)$ . Finally, in the next section, we recall some standard 
results and terminology of deformation theory that will be useful later, 
focusing our attention on properties of equisingular deformations of curves with only $A_k$-singularities
on $K3$ surfaces. In Section \ref{tangentspace} we also provide the following regularity condition. 

\begin{proposition}  \label{prop:regolarita-gen}
Let $S$ be a $K3$ surface with $\Pic S \cong \ZZ[H]$, let ${\p}={\p}_a(H)$ and $n \geq 1$ an integer. 
Assume that $C \in |nH|$ is a reduced and irreducible curve on $S$ having 
precisely $d_k\geq 0$ singularities of type $A_{k-1}$, for each $k \geq 2$, and no further singularities, such that 
\begin{eqnarray}
\sum_{k}(k-1)d_k&=&\deg T^1_C< \frac{\p+2}{2}=\frac{H^2}{4}+2,\;\;\;\;\mbox{if}\;\;n=1\;\;\mbox{ or}\label{n=1}\\ 
\sum_{k}(k-1)d_k&=&\deg T^1_C  < 2(n-1)(\p-1)=(n-1)H^2,\;\;\mbox{if}\;\;   n \geq 2, \label{nalmeno2}
\end{eqnarray}
where $T^1_C$ is the first cotangent bundle of $C.$
Then $[C]$ is a regular point of $ES(C)$ and the singularities of $C$ may be smoothed independently.
\end{proposition}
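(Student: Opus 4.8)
The plan is to deduce regularity of $[C]$ in $ES(C)$ from a vanishing statement for a twisted ideal sheaf, exactly as in the classical Severi-variety arguments adapted to $K3$'s. Recall that for a reduced curve $C$ with only $A_k$-singularities, the equisingular deformations inside $|nH|$ are governed by the cohomology of the ideal sheaf $\Ii_{Z/S}(nH)$, where $Z$ is the equisingular scheme (the ``adjoint'' or ``equisingularity'' scheme) cut out locally at an $A_k$-point by the appropriate power of the maximal ideal along the branch — concretely, $\length(Z) = \sum_k a_k \lfloor (k+1)/2 \rfloor$ locally, and the obstruction/tangent balance is measured by $H^1(S,\Ii_{Z/S}(nH))$ together with $H^0$. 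So the first step is: reduce ``$[C]$ is a regular point of $ES(C)$ and the singularities smooth independently'' to the single cohomological assertion $H^1(S,\Ii_{Z/S}(nH)) = 0$ (regularity) and a surjectivity statement on the relevant $T^1$'s (independent smoothing), the latter following formally once the $H^1$ vanishes because then the natural map $H^0(\O_S(nH)) \to \bigoplus_p T^1_{C,p}$ is surjective.

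The heart is therefore the vanishing $H^1(S,\Ii_{Z/S}(nH)) = 0$. Here I would invoke the standard $K3$ machinery: since $\Pic S \cong \ZZ[H]$ and $\deg Z = \length(Z) \le \sum_k k a_k = \deg T^1_C$ is bounded by the right-hand sides of \eqref{n=1}, \eqref{nalmeno2}, a zero-dimensional scheme of that small a length on a $K3$ cannot fail to impose independent conditions on $|nH|$. The clean tool is the Bogomolov–Reider–type / Serre-construction argument: if $H^1(S,\Ii_{Z/S}(nH)) \ne 0$ then $Z$ (or a subscheme) fails to impose independent conditions, and by the Serre correspondence one produces a rank-$2$ bundle $\mathcal E$ with $c_1(\mathcal E) = nH$, $c_2(\mathcal E) = \length(Z')$ for some $Z' \subseteq Z$, which is not Bogomolov-stable; destabilizing gives an effective divisor $D = aH$ with $0 < a \le n$ and $(nH - D)\cdot D = a(n-a)H^2$ large relative to $\deg Z'$, contradicting the numerical bounds \eqref{n=1} and \eqref{nalmeno2}. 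The two cases $n=1$ and $n\ge 2$ are separated precisely because for $n=1$ there is no room for a destabilizing $aH$ with $0<a<1$, so one compares instead with the adjoint/Clifford bound $\frac{H^2}{4}+2$, whereas for $n\ge 2$ the worst destabilizer is $a=1$, giving the threshold $(n-1)H^2$.

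I would then close the loop on ``independent smoothing'': once $H^1(S,\Ii_{Z/S}(nH)) = 0$, the same vanishing for each subscheme $Z'' \subseteq Z$ obtained by omitting some of the local equisingular conditions holds a fortiori (length only decreases), so the restriction maps $H^0(\O_S(nH)) \to T^1_{C,p}$ are jointly surjective over any chosen subset of singular points; this is exactly the statement that the singularities can be smoothed independently and that for any $d_k' \le d_k$, $\delta' \le \delta$ the partial smoothings are realized by actual curves in $|nH|$.

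The main obstacle I expect is getting the numerical constants in \eqref{n=1} and \eqref{nalmeno2} to come out exactly right from the destabilization step — i.e., matching $\deg T^1_C = \sum_k k a_k$ (rather than the smaller $\length(Z) = \sum_k a_k\lfloor (k+1)/2\rfloor$) against $a(n-a)H^2$ and the $\frac{H^2}{4}+2$ Clifford-type bound, and checking that the Serre-constructed bundle genuinely violates Bogomolov stability rather than merely sitting on the boundary. This requires care with: (i) which subscheme $Z'$ of $Z$ one feeds into the Serre construction to make $c_2$ as small as possible while keeping $H^1 \ne 0$; (ii) the edge cases where $C$ might be non-reduced or split off a multiple of $H$ — excluded here by irreducibility and $\Pic S = \ZZ[H]$; and (iii) confirming that $A_k$-singularities, being of multiplicity $2$, make the equisingular scheme $Z$ curvilinear along each branch, so the length bookkeeping is the simple one above and no subtler non-reduced structure of $Z$ interferes.
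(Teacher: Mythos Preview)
There is a real confusion in your identification of the scheme $Z$. You write $\length(Z) = \sum_k a_k \lfloor (k+1)/2 \rfloor$, which is the total $\delta$-invariant, i.e.\ the length of the \emph{conductor} (adjoint) scheme; but regularity of $ES(C)$ is governed by the \emph{equisingular} scheme, and for $A_k$-singularities the equisingular ideal equals the Jacobian ideal $(y,x^k)$ (cf.\ Remark~\ref{A_k}), giving colength $k$ at each $A_k$-point and hence $\length(Z) = \sum_k k a_k = \deg T^1_C$. With the correct $Z$ the ``main obstacle'' you flag---reconciling $\deg T^1_C$ with a putatively smaller $\length(Z)$---simply disappears: these quantities are equal, and from the sequence $0\to\O_S\to\Ii_{Z/S}(nH)\to\N'_{C|S}\to 0$ one gets $h^1(\N'_{C|S})=h^1(\Ii_{Z/S}(nH))+1$, so the vanishing you aim for is exactly equivalent to regularity. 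Your vague ``Clifford bound'' for $n=1$ should also be replaced by the Mukai inequality $v(\mathcal E)^2\geq -2$ for a stable rank-two sheaf on a $K3$ (the bundle is automatically stable when $n=1$ since $H$ is primitive), which yields $\length(Z')\geq \tfrac{H^2+6}{4}=\tfrac{\p_a(H)+2}{2}$; with these fixes your Serre-construction/instability argument does recover both thresholds \eqref{n=1} and \eqref{nalmeno2}.

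The paper's proof is quite different. Assuming $h^1(\N'_{C|S})\geq 2$, it forms the rank-one torsion-free sheaf $\A=\mathfrak{ext}^1(\N'_{C|S},\O_S)$ on $C$, which satisfies $h^0(\A)\geq 2$ and $\deg\A=\deg T^1_C$, and then invokes G\'omez's deformation theorem for torsion-free sheaves to move the pair $(C,\A)$ to a pair $(C',\A')$ with $C'\in|nH|$ smooth and $\A'$ a line bundle of the same degree with $h^0\geq 2$. This produces a $g^1_{\deg T^1_C}$ on a smooth member of $|nH|$, contradicting the gonality bounds of Lazarsfeld ($n=1$) and of \cite{kn1} ($n\geq 2$). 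So your route stays on the surface and analyses a rank-two bundle directly, while the paper passes to a smooth curve and invokes gonality. Since Lazarsfeld's gonality theorem is itself proved via rank-two bundles on the $K3$, the two arguments are close cousins and naturally produce identical numerics; the paper's route trades the Cayley--Bacharach/stability case-analysis for a citation of G\'omez's result.
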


The previous proposition is obtained by results of Brill-Noether theory on curves of $K3$ surfaces \cite{laz, kn1, gom1}. 
In particular, its proof does not require any degeneration argument 
of surfaces or curve singularities and is thus independent of the other results in this paper. 
Proposition \ref{prop:regolarita-gen} together with Theorem \ref{thm:maink3}  provide sufficient conditions
for the variety $\mathcal V_{nH,\small1^{d_2},\,2^{d_3},\ldots,\,{(m-1)}^{d_m}}^{S}$ to be non-empty and regular,
see Remark \ref{ex-reg-1}.

\subsection*{Acknowledgments} 
The first author is indebted with J. Harris for invaluable conversations
on deformation theory of curve singularities. She also benefited from conversations 
with F. van der Wyck. Both authors want to 
express deep gratitude to C. Ciliberto and T. Dedieu for many stimulating questions
and suggestions. 
Finally the authors are very grateful to the referee for his careful reading and many comments. He pointed out 
a mistake in the original version of the paper and provided a series of suggestions that ultimately  
improved the exposition and readability.  

\section{Tangent spaces and a new regularity condition}\label{tangentspace}
In this section we recall some properties of the equisingular and equigeneric
deformation loci of a reduced curve on an arbitrary smooth projective $K3$ surface $S$
and, in particular, of a curve with only $A_k$-singularities. Finally, at the end of the section, 
we prove Proposition \ref{prop:regolarita-gen}.

Let $S$ be a smooth projective $K3$ surface and let $D$ be a Cartier divisor on $S$ of arithmetic genus
${\p}_a(D).$ Assume that $|D|=|\mathcal O_S(D)|$ is a Bertini linear system,
i.e. a linear system without base points and whose general element corresponds to a smooth
curve. (In fact, by \cite{saint-donat}, every irreducible curve $D$ on $S$ such that $D^2\geq 0$ defines a
Bertini linear system on $S.$) If $C\in |D|$ is a reduced curve, we consider 
the following standard exact sequence of sheaves on $C$
 \begin{equation}\label{sequenceoneone}
\xymatrix{
0  \ar[r] &      \Theta_C  \ar[r] &      \Theta_{S}|_C \ar[r]^{\alpha} & \mathcal N_{C|S} \ar[r]^{\beta} &   T^1_C  \ar[r] &     0, }
 \end{equation}
 where $\Theta_C = \mathfrak{hom}(\Omega_C^1,\mathcal O_C)$ is the tangent sheaf of $C,$
  $\Theta_{S}|_C$ is the tangent sheaf of $S$ restricted to $C$, $\mathcal N_{C|S}\simeq \mathcal O_C(C)$ is the normal
   bundle of $C$ in $S$, and  $T^1_C$ is the first cotangent sheaf of $C$.  The latter is supported on the singular 
  locus $\Sing(C)$ of $C$, and its stalk $T^1_{C,p}$ at every singular point
 $p$ of $C$  is the versal deformation space of the singularity (see \cite[(3.1)]{diaz_harris}, \cite{ser}, \cite{hm} or \cite{GLS}).
Identifying $H^0(C,\N_{C|S})$ with the tangent space $T_{[C]}|D|$, 
the induced map 
\begin{equation}\label{dalpha}
\xymatrix{
H^0(\beta):\; H^0(C,\N_{C|S})\ar[r] &  H^0(C,T^1_C)=\oplus_{p\in \Sing(C)}T^1_{C,p}}
\end{equation}
is classically identified with the differential at $[C]$ of the versal map from an analytic neighborhood
of $[C]$ in $|D|$ to an analytic neighborhood of the origin in $H^0(C,T^1_C).$ 
By this identification and by the fact that the origin in $T^1_{C,p}$ is the only point parametrizing singularities 
analytically equivalent to the singularity of $C$ at $p$ \cite[Lemma (3.21)]{diaz_harris}, we have that the 
global sections of the kernel  $\N^\prime_{C|S}$ of the sheaf map $\beta$ in \eqref{sequenceoneone} 
are infinitesimal deformations of $C$ that are analytically equisingular, i.e. infinitesimal deformations of $C$ preserving the analytic class of every
singularity of $C$ \cite[Definition (3.9)]{diaz_harris}.  For this reason, $\N^\prime_{C|S}$ is usually called the \textit{equisingular normal 
sheaf of $C$ in $S$} \cite[Prop. 1.1.9 (ii)]{ser}.  Let $J$ be the Jacobian ideal of $C$. 
By a straightforward computation,
$J\otimes\N_{C|S}=\N^\prime_{C|S}$ and, consequently, $\dim(H^0(C,T^1_C))=\deg(J)=\sum_{p\in C}\deg (J_p),$
where $J_p$ is the localization of $J$ at $p$. Keeping in mind the versal property of $T^1_C,$
the following definition makes sense.  

\begin{definition}\label{jacobian}
We say that the singularities of $C$ may be smoothed independently if the map
$H^0(\beta)$ in \eqref{dalpha}
is surjective or, equivalently, if $h^0(C, \N^\prime_{C|S})=h^0(C,\N_{C|S})-\deg(J).$
If this happens, we also say that the Jacobian ideal imposes linearly independent conditions
to the linear system $|D|.$
\end{definition}

\begin{remark}\label{smoothing_independently}
If $C$ is an irreducible reduced curve in a Bertini linear system $|D|$ on a smooth projective
$K3$ surface $S$, then $h^1(C,\N_{C|S})=h^1(C,\mathcal O_C(C))=h^1(C,\omega_C)=1,$ where $\omega_C$
denotes the dualizing sheaf of $C.$ In particular, by the short exact sequence of sheaves on $C$
$$
\xymatrix{0\ar[r]&\N_{C|S}^\prime\ar[r]&\N_{C|S}\ar[r]& T^1_C\ar[r]&0,}
$$
we have that $h^1(C,\N^\prime_{C|S})\geq 1$, and the singularities of $C$ may be smoothed independently
if and only if $h^1(C,\N^\prime_{C|S})= 1.$
\end{remark}
The locus in $|D|$ of deformations of $C$ preserving the analytic class of singularities coincides with the locus
of formally locally trivial deformations in the Zariski topology or locally trivial deformations in the \'etale topology
\cite[Proposition (3.23)]{diaz_harris}.  In general this locus is a proper subset of the Zariski locally closed subset  
$ES(C)\subset |D|$ parametrizing topologically equisingular deformations of $C$ in $|D|,$ i.e. deformations of $C$
preserving the equisingular class of every singularity of $C.$  For the notion of
equisingular deformation of a plane singularity, we refer to \cite[Definition (3.13)]{diaz_harris}.
The {\it equisingular deformation locus} $ES(C)$ of $C$ in $|D|$ has a natural structure of scheme, representing
a suitable deformation functor \cite[Section 2]{GL}. The tangent space $T_{[C]}ES(C)$ to $ES(C)$ 
at the point $[C]$ corresponding to $C,$ is well understood. In particular, there exists an ideal sheaf $I,$
named the {\it equisingular ideal} of $C,$ such that $J\subset I$ and
$$T_{[C]}ES(C)\simeq H^0(C, I\otimes \mathcal O_C(C)).$$ 

\begin{definition}\label{definition-regularity}
We say that $[C]$ is a regular point of $ES(C)$ if $ES(C)$ is smooth of the expected dimension
at $[C],$ equivalently if 
$$\dim(T_{[C]}ES(C))=\dim(H^0(C, I\otimes \mathcal O_C(C)))=\dim(H^0(C, \mathcal O_C(C)))
-\deg I.$$
In this case, we also say that the equisingular ideal imposes linearly independent conditions to 
curves in $|D|.$ 
\end{definition}
We also recall the inclusion $J\subset I\subset A,$  where $A$ is the conductor ideal.

Throughout this paper we will be interested in curves with $A_k$-singularities. An $A_k$-singularity has 
analytic equation  $y^2=x^{k+1}.$ Every plane curve singularity of multiplicity $2$ is an $A_k$-singularity
for a certain $k\geq 1.$ In particular, two 
singularities of multiplicity $2$ are analytically equivalent if and only if they are topologically equivalent.   

\begin{remark}\label{A_k}
The equisingular ideal $I$ of an $A_k$-singularity of equation $y^2=x^{k+1}$ 
coincides with the Jacobian ideal $J=I=(y,x^{k})$ \cite[Proposition 6.6]{wahl}.
It follows that, if $C\in |D|$ is a reduced curve on $S$ with only $A_k$-singularities, $ W\subset |D|$ is the linear system 
of curves passing  through every $A_k$-singularity $p\in C$ and tangent there to the reduced 
tangent cone to $C$ at $p$ with multiplicity $k$ and $\mathcal W\subset H^0(S,\O_S(D))$ is the vector space such that
$\mathbb P(\mathcal W)=W$, then the tangent space 
$$T_{[C]}ES(C)\simeq H^0(C,\N_{C|S}\otimes I)=H^0(C,\N_{C|S}\otimes J)
=H^0(C,\N_{C|S}^\prime)$$ to $ES(C)$ at the point $[C]$ is isomorphic to $r_C(\mathcal W)$, where
$r_C: H^0(S,\O_S(D))\to H^0(C,\O_C(D))$ is the natural restriction map. In particular, every $A_k$-singularity imposes at most 
$k=\dim(\mathbb C[x,y]/(y,x^{k}))$ linearly independent conditions to $|D|$, and the equisingular 
deformation locus $ES(C)$ of $C$ in $|D|$ 
is regular at $[C]$ if and only if the singularities of $C$ may be smoothed independently.
If $C\in |D|$ is reduced and irreducible with $d_k$ singularities of type $A_{k-1}$, $k=2,...,m$, and no further singularities, then  
the reduced support of $ES(C)$ is an open set in one irreducible component $V$ of the variety 
$\mathcal V_{D,\small1^{d_2},\,2^{d_3},\ldots,\,{(m-1)}^{d_m}}^{S}$ introduced in Section \ref{introduction}. 
In particular we have that 
$$
T_{[C]}V\subset T_{[C]}ES(C)\simeq H^0(C,\N_{C|S}^\prime).
$$
We say that $V$ is regular at $[C]$ if $ES(C)$ is regular at $[C]$,  in which case we have that
$T_{[C]}V=T_{[C]}ES(C)$ and $\dim(T_{[C]}V)=\dim(T_{[C]}ES(C))=h^0(C,\N_{C|S}^\prime)=\dim(|D|)-\sum_kd_k(k-1).$
Moreover $V$ is said to be regular if it is regular at every point corresponding to
an irreducible and reduced curve with $d_k$ singularities of type $A_{k-1}$, $k=2,\dots,m$, and no further singularities.
Finally, we say that  $\mathcal V_{D,\small1^{d_2},\,2^{d_3},\ldots,\,{(m-1)}^{d_m}}^{S}$ is regular if all its irreducible
components are regular. In particular, if $\mathcal V_{D,\small1^{d_2},\,2^{d_3},\ldots,\,{(m-1)}^{d_m}}^{S}$ is regular,
all its irreducible components are generically smooth of the expected dimension.
\end{remark}  

If $k$ is odd, an $A_k$-singularity is also called a $\frac{k+1}{2}$-tacnode whereas, if $k$ is even, an 
$A_k$-singularity is said to be a cusp. Moreover, by classical terminology, $A_1$-singularities are nodes, 
$A_2$-singularities are ordinary cusps and $A_3$-singularities are called simple tacnodes.
As we already observed, for every $\delta\leq \p_a(D)$, the Severi variety $\mathcal V_{D,1^\delta}^{S}$ of $\delta$-nodal curves is a regular variety, i.e. 
is smooth of the expected dimension at 
every point $[C]$ corresponding to a curve with exactly $\delta$ nodes as singularities \cite{tan}.

Now we may prove our regularity condition
for curves with only $A_k$-singularities on a $K3$ surface $S$ with $\Pic S \cong \ZZ[H]$.

\begin{proof}[Proof of Proposition \ref{prop:regolarita-gen}.]
Assume that $[C]$ is {\it not} a regular point of $ES(C).$ Then, by Remarks \ref{smoothing_independently} and
\ref{A_k}, we must have $h^1(\N'_{C|S}) \geq 2.$ Now consider $\N'_{C|S}$ as a torsion sheaf on $S$ and  
define $\A:= \mathfrak{ext}^1(\N'_{C|S}, \O_S)$. Then $\A$ is a rank one torsion free sheaf on $C$
 and a torsion sheaf on $S.$ Moreover, by \cite[Lemma 2.3]{gom1}, being $S$ a $K3$ surface, we have that 
 $h^0(\A)=h^1(\N'_{C|S}) \geq 2$ and 
\[\deg \A = C^2-\deg \N'_{C|S}= \deg T^1_C=\sum_{k}(k-1)d_k.\]
 By \cite[Prop.~ 2.5 and proof of Thm.~I at p.~749]{gom1}, the pair $(C,\A)$ may be deformed to a pair $(C',\A')$ where 
 $C' \sim C$ is smooth, and $\A'$ is a line bundle on $C'$ with $h^0(\A') \geq h^0(\A)$ and $\deg \A'=\deg \A$. In other 
 words, there is a smooth curve in $|nH|$ carrying a $g^1_{\deg T^1_C}$. If $n=1$ then, by Lazarsfeld's famous 
 result \cite[Cor.~1.4]{laz}, no curve in $|H|$ carries any $g^1_d$ with $2d < {\p}_a(H)+2.$ 

Now assume that $n \geq 2$. By \cite[Theorem 1.3]{kn1}, the minimal gonality
 of a smooth curve in a complete linear system $|L|$ on any $K3$
 surface is either
$\lfloor \frac{p_a(L)+3}{2} \rfloor =\lfloor \frac{L^2}{4} \rfloor +2$
(the gonality of a generic curve of genus $p_a(L)$) or
the minimal integer $d$ such that $2 \leq d < \lfloor
\frac{p_a(L)+3}{2} \rfloor$ and there is an effective divisor $D$
satisfying $D^2 \geq 0$,
$(L^2,D^2)  \neq (4d-2, d-1)$ and
\[ 2D^2 \stackrel{(i)}{\leq} L.D \leq D^2+d  \stackrel{(ii)}{\leq} 2d, \]
with equality in (i) if and only if $L \sim 2D$ and $L^2 \leq 4d$ and
equality in (ii) if and only if $L \sim 2D$ and $L^2 = 4d$. If $L \sim
nH$ with $n \geq 2$ and $\Pic S \cong \mathbb{Z}[H]$, one easily
verifies that the minimal integer satisfying these conditions is
$d=(n-1)H^2 = 2(n-1)(p_a(H)-1)$ (with $D=H$). The
result follows.
\end{proof}

\begin{remark}
As far as we know, the previously known regularity condition
for curves $C$ as in the statement of the proposition above is given by
\begin{equation}\label{regularity_keilen}
\sum_{k}k^2d_k\leq n^2H^2,
\end{equation}
which has been deduced from \cite[Cor.~2.4]{keilen1}. 
This result is very different from Proposition \ref{prop:regolarita-gen} and we will not
compare the two results here. 
\end{remark}

We conclude this section with a naive upper-bound on the dimension of the equisingular deformation locus
 of an irreducible curve with only $A_k$-singularities on a smooth $K3$ surface. This result is a simple 
 application of Clifford's theorem, and for nodal curves it reduces to Tannenbaum's proof that Severi varieties of irreducible
 nodal curves on $K3$ surfaces have the expected dimension \cite{tan}. 

 \begin{lemma}  \label{prop:dimdasopra-gen}
 Let $|D|$ be a Bertini linear system on a smooth projective $K3$ surface $S.$ 
Let $C\in |D|$ be a reduced and irreducible genus $g$ curve with only $A_k$-singularities,
$\tau$ of which are (not necessarily ordinary) cusps. 
Then 
 \[ \dim T_{[C]}ES(C) \leq g-\tau/2.\]
\end{lemma}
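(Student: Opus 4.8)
The plan is to exploit the fact that, for a curve with only $A_k$-singularities, the equisingular tangent space coincides with $H^0(C,\N'_{C|S})$ (Remark \ref{A_k}), and then bound $h^0(\N'_{C|S})$ via Clifford's theorem applied on the normalization. First I would recall, as in the proof of Proposition \ref{prop:regolarita-gen}, the identification $\N'_{C|S} = J \otimes \O_C(C)$ with $\deg(\N'_{C|S}) = C^2 - \deg T^1_C = C^2 - \sum_k k a_k$. Since every cusp is an $A_{2j}$-singularity for some $j \geq 1$, it contributes an even number $2j \geq 2$ to $\deg T^1_C$, so the $\tau$ cusps alone force $\deg T^1_C \geq 2\tau$ — but more importantly I want to compare with the \emph{equigeneric} sheaf, i.e. the conductor ideal $A$, using the inclusion $J \subset A$ recalled after the second definition, together with $\deg A = {\p}_a(D) - g$.

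Next I would pass to the normalization $\phi \colon \widetilde{C} \to C$. As noted in Remark \ref{corollaryzariski}, on a $K3$ surface $\phi^*(A \otimes \O_C(C)) \simeq \omega_{\widetilde{C}}$, which has degree $2g-2$. The sheaf $\N'_{C|S} = J\otimes\O_C(C)$ sits inside $A \otimes \O_C(C)$, so $H^0(C,\N'_{C|S}) \hookrightarrow H^0(C, A\otimes\O_C(C)) \hookrightarrow H^0(\widetilde C, \omega_{\widetilde C})$; hence $h^0(\N'_{C|S}) \leq h^0(\widetilde C, \phi^*(\N'_{C|S}))$, and $\phi^*(\N'_{C|S})$ is a subsheaf of $\omega_{\widetilde C}$. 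Write $\phi^*(\N'_{C|S}) = \omega_{\widetilde C}(-\mathfrak{d})$ for an effective divisor $\mathfrak d$ on $\widetilde C$ whose degree measures, at each singular point, the extra vanishing imposed by the Jacobian ideal over the conductor. The key local computation is that at an $A_k$-singularity with $k=2j$ even (a cusp) this extra degree is at least... I would check that $\deg(A_p) - \deg(J_p)$ together with the ramification data gives a contribution of at least $1$ to $\deg\mathfrak d$ per cusp; combined over all $\tau$ cusps this yields $\deg\mathfrak{d} \geq \tau$ (and one should be careful to also track the split of the conductor length among branches, which is where the tacnodes, being $A_{2j-1}$ with two smooth branches, contribute $0$ to $\mathfrak d$ while a cusp, being unibranch, contributes its full defect).

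Then Clifford's theorem finishes it: $\phi^*(\N'_{C|S}) = \omega_{\widetilde C}(-\mathfrak d)$ is a special line bundle on the smooth curve $\widetilde C$ of genus $g$ (special because it injects into $\omega_{\widetilde C}$), so
\[
h^0(\widetilde C, \omega_{\widetilde C}(-\mathfrak d)) \leq \frac{\deg(\omega_{\widetilde C}(-\mathfrak d))}{2} + 1 = \frac{2g-2-\deg \mathfrak d}{2} + 1 = g - \frac{\deg\mathfrak d}{2} \leq g - \frac{\tau}{2}.
\]
Since $\dim T_{[C]}ES(C) = h^0(C,\N'_{C|S}) \leq h^0(\widetilde C, \omega_{\widetilde C}(-\mathfrak d))$, the bound follows. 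For the nodal case $\tau = 0$ this recovers $\dim T_{[C]}ES(C) \leq g$, which is exactly Tannenbaum's expected-dimension statement, as claimed.

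The main obstacle I anticipate is the local bookkeeping in the second paragraph: precisely computing, at an $A_{2j}$-cusp, the difference between the conductor and the Jacobian ideal after pulling back to the normalization, and confirming it contributes at least $1$ (ideally exactly $j$, but $1$ suffices) to $\deg \mathfrak d$, while verifying that tacnodes contribute nothing — i.e. that $\phi^*(J_p \otimes \O_C(C))$ agrees with $\omega_{\widetilde C}$ locally near the two preimages of a tacnode. This is a purely local analytic-germ computation (using $J = (y, x^{k})$ for $y^2 = x^{k+1}$ and the conductor $A = (x^{\lceil k/2\rceil})$ or the analogous branchwise description), so it is routine but must be done with care; everything global then reduces to Clifford.
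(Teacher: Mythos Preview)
Your approach is correct and reaches the same Clifford endgame as the paper, but the paper computes the degree of $\phi^*\N'_{C|S}$ by a cleaner route that bypasses your local comparison of $J$ and $A$. Instead of going through the conductor, the paper invokes (from \cite{ser}) the exact sequence on $\widetilde C$
\[
0 \longrightarrow \Theta_{\widetilde C}(Z) \longrightarrow \phi^*\Theta_S \longrightarrow \N'_\phi \longrightarrow 0
\]
together with the identification $\N'_\phi \simeq \phi^*\N'_{C|S}$, where $Z$ is the zero divisor of the differential $d\phi$. On a $K3$ surface $\deg \phi^*\Theta_S=0$, so $\deg \N'_\phi = 2g-2-\deg Z$. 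The point is that $Z$ is precisely the ramification divisor of $\phi:\widetilde C \to S$: a branch of an $A_k$-singularity contributes to $Z$ if and only if it is singular, which happens exactly for the unibranch (cuspidal) $A_{2j}$ and then with multiplicity~$1$ (the parametrization $t\mapsto(t^2,t^{2j+1})$ has $d\phi$ vanishing simply). Hence $\deg Z=\tau$, giving $\deg \N'_\phi = 2g-2-\tau$, and Clifford finishes exactly as you wrote.

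Your route via $J\subset A$ also works, and in fact the local computation you flag as the ``main obstacle'' comes out cleanly: at an $A_{2j}$-cusp one finds $\phi^*J=(t^{2j+1})$ and $\phi^*A=(t^{2j})$, so the defect is exactly~$1$ (not~$j$ as you speculated); at an $A_{2j-1}$-tacnode, on each branch $\phi^*J=(t^{j})=\phi^*A$, so the defect is~$0$. Thus $\deg\mathfrak d=\tau$ on the nose. The paper's exact-sequence argument has the advantage of making this transparent without any case analysis: ramification of the normalization map is the intrinsic meaning of your divisor~$\mathfrak d$.
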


\begin{proof}
Let $C$ and $S$ be as in the statement. By Remark \ref{A_k}, since $C$ has only $A_k$-singularities, we have that  
$T_{[C]}ES(C)=H^0(C,\N^\prime_{C|S}).$  Moreover, by standard deformation theory (see e.g. \cite[(3.51)]{ser}), if 
$\phi:\widetilde{C} \to C\subset S$ is the normalization map,
we have the following exact sequence of line bundles on $\widetilde C$
\begin{equation}\label{senzatorsione}
\xymatrix{0  \ar[r] &  \Theta_{\widetilde C}(Z)     \ar[r]^{\phi_*} &  \phi^*\Theta_{S}    \ar[r] &  { \mathcal N}'_\phi  \ar[r] &  0}.
\end{equation} 
Here $\phi_*:\Theta_{\widetilde C}\to \phi^*\Theta_{S}$ is the differential map of $\phi,$ having zero divisor
$Z$, and  $\mathcal N_\phi^\prime\simeq { \mathcal N}_\phi/\mathcal K_\phi$
is the quotient of the normal sheaf ${\mathcal N}_\phi$ of $\phi$ by its torsion subsheaf $\mathcal K_\phi$
(with support on $Z$).
By \eqref{senzatorsione}, using that $S$ is a $K3$ surface,  we have that
$h^1(\widetilde C,\mathcal N_\phi^\prime)=h^1(\widetilde C,\Theta_{\widetilde C}^{-1}(-Z)  )\geq 1.$
Moreover, again by \cite[p.~174]{ser}, one has
$$\mathcal N^\prime_{\phi} \simeq\phi^*\mathcal N^\prime_{C|S}\,\,\,\textrm{
and\,\,hence}\,\,\, h^0(C,\mathcal N^\prime_{C|S})\leq h^0(\widetilde C,\mathcal N^\prime_{\phi}).$$
Finally, by applying Clifford's theorem, we deduce the desired inequality 
\[
h^0(C,\N'_{C|S}) \leq h^0(\widetilde C,{\N}^\prime_{\phi}) \leq \frac{1}{2}\deg{\N}^\prime_{\phi} +1 = 
\frac{1}{2}(2g-2-\tau)+1 = g-\frac{\tau}{2}. \]
\end{proof}

\section{Smoothing tacnodes}\label{Smoothing tacnodes}
In this section, by using classical deformation theory of plane curve singularities, 
we will find sufficient conditions for the existence of curves with $A_k$-singularities
on smooth projective complex surfaces that we may obtain as deformations of a "suitable" 
reducible surface.

Let $\mathcal X\to\mathbb A^1$ be a flat family of projective surfaces with smooth total space $\mathcal X$. 
Assume moreover that $\mathcal X\to\mathbb A^1$  has smooth and regular general fibre
$\mathcal X_t$ and reducible central fiber $\mathcal X_0=A\cup B$,
consisting of two irreducible components $A$ and $B$ with $h^1(\mathcal O_A)=
h^1(\mathcal O_B)=h^1(\mathcal O_{\mathcal X_t})=0$
and intersecting transversally along a smooth
curve $E=A\cap B.$ Let $D$ be a Cartier divisor on $\mathcal X$. We denote by 
$ D_t=D\cap\mathcal X_t$ the restriction  of $D$ to the fiber $\mathcal X_t.$
Notice that, since $\mathcal X_0=A\cup B$ is a reducible surface, the Picard group 
$\textrm{Pic}(\mathcal X_0)$ of $\mathcal X_0$ is the fiber product of the Picard groups 
$\textrm{Pic}(A)$ and $\textrm{Pic}(B)$ over $\textrm{Pic}(E)$. In particular, we have that
$$
|\mathcal O_{\mathcal X_0}(D)|
=\mathbb P(H^0(\mathcal O_{A}(D))\times_{H^0(\mathcal O_{E}(D))}H^0(\mathcal O_{B}(D))).
$$
From now on, for every  curve $C\subset \mathcal X_0$, we will denote by $C_A$ and $C_B$
the restrictions of $C$ to $A$ and $B$, respectively.
Let $p$ be a point of $E.$ Choose local analytic coordinates $(x,z)$ of $A$ at $p$ 
and $(y,z)$ of $B$ at $p$ in such a way that the equation of $\mathcal X$ at $p$, by using coordinates
$(x,y,z,t)$, is given by $xy=t.$

Now assume there exists a divisor $C=C_A\cup C_B\subset\mathcal X_0$, with $[C]\in|D_0|$, 
such that $C_A$ and $C_B$ are both smooth curves, tangent to $E$ at a point 
$p\in E$ with multiplicity $m$ and intersecting $E$ transversally outside $p$. Local analytic equations 
of $C$ at $p$ are given by 
\begin{eqnarray}\label{tacnode}
\left\{\begin{array}{l}
y+x-z^m=0\\
xy=t \\
t=0,\\
\end{array}\right.
\end{eqnarray}
with $m\geq 2.$ 
Since the singularity of $C$ at $p$ is analytically equivalent to the tacnode
of local equation
\begin{equation}\label{m-tacnode}
f(y,z)=(y-z^{m})y=0,
\end{equation}
we say that \textit{$C$ has an $m$-tacnode at $p$.} 

\begin{definition} \label{W_{p,m}}
We say that the $m$-tacnode of $C$
at $p$ imposes linearly independent conditions to $|D_0|$ if the 
linear system  $W_{p,m}\subset |D_0|$ parametrizing curves $F_A\cup F_B\subset\mathcal X_0,$
such that $F_A$ and $F_B$ are tangent to $E$ at $p$ with multiplicity $m$, has codimension $m$
(which is the expected codimension).   
\end{definition}

\begin{remark}\label{transversality}
We remark that, if the $m$-tacnode of $C$ at $p$ imposes linearly independent conditions to $|D_0|,$
then, for every $r\leq m,$ the locus $W_{p,r}\subset |D_0|$ parametrizing curves with an $r$-tacnode
at $p$ is non-empty of codimension exactly $r.$ In particular, the general element of an analytic neighborhood
of $[C]$ in $|D_0|$ intersects $E$ transversally at $m$ points close to $p.$  
\end{remark}

We now introduce the main result of this paper.

\begin{theorem} \label{prop:defcusps}
Let  $\{d_2,\ldots,d_{m}\}$ be an $(m-1)$-tuple of non-negative integers such that
$$\sum_{j=2}^m(j-1)d_j=m-1.$$ Using the notation above, assume that:
\begin{itemize}
\item[1)] $\dim(|D_0|)=\dim(|D_t|)$;\label{liscezza_schema_hilbert} 
\item[2)] the linear system  $W_{p,m-1}\subset |D_0|$ of curves with an $(m-1)$-tacnode
at $p\in E$ has dimension $\dim(|D_0|)-m+1$.
\end{itemize} 
Denote by $\mathcal V_{D_t,\small1^{d_2},\,2^{d_3},\ldots,\,{(m-1)}^{d_m}}^{\mathcal X_t}\subset |D_t|$ the Zariski closure of the locus in $|D_t|$ of irreducible curves with exactly $d_j$ singularities
of type $A_{j-1},$ for every $2\leq j\leq m,$  and no further singularities.
Then, for a general $t\neq 0$, there exists a
non-empty irreducible component $V_t$ of  $\mathcal V_{D_t,\small1^{d_2},\,2^{d_3},\ldots,\,{(m-1)}^{d_m}}^{\mathcal X_t}\subset |D_t|$
whose general element $[C_t]\in V_t$ is a regular point of $V_t,$  i.e. $\dim(T_{[C_t]}V_t)=\dim(T_{[C_t]}ES(C_t))=
h^0(C_t,\mathcal N^\prime_{C_t|\mathcal X_t})=\dim(|D_t|)-\sum_{j=2}^m(j-1)d_j.$
\end{theorem} 

The proof of this theorem will occupy us until Corollary \ref{ingenerale}. In the remainder of the section we will discuss several consequences
and applications of Theorem \ref{prop:defcusps}.
 
We want to obtain curves with $A_k$-singularities on the general fibre $\mathcal X_t$ of $\mathcal X$ as deformations of $C\subset\mathcal X_0.$
The moduli space of deformations of $C$ in $\mathcal X$ is contained in an irreducible component
$\mathcal H$ of the relative Hilbert scheme $\mathcal H^{\mathcal X|\mathbb A^1}$
of the family $\mathcal X\to\mathbb A^1.$
Let $\pi_\mathcal H:\mathcal H\to\mathbb A^1$ be the natural map from $\mathcal H$ to 
$\mathbb A^1.$ By the hypothesis of regularity on the fibres of the family $\mathcal X,$ 
we have that the general fiber $\mathcal H_t$ of $\pi_\mathcal H$ coincides with 
the linear system $|\mathcal O_{\mathcal X_t}(D_t)|$, whereas, in general, 
the central fiber $\mathcal H_0$ of $\pi_\mathcal H$ consists of several irreducible components 
of the Hilbert scheme of $\mathcal X_0$, only one of which, call it $\mathcal H_0^0$, can be generically identified with
$|\mathcal O_{\mathcal X_0}(D_0)|$. This happens because the limit line bundle on $\mathcal X_0$
 of a line bundle on $\mathcal X_t$
is unique only up to twisting with a multiple of $\mathcal O_{\mathcal X}(A)$ (see for example \cite[Section 2.2]{cd}).
Moreover,  by standard deformation theory 
 (cf. for example \cite[Prop.~4.4.7]{ser}), the hypothesis  $\dim(|D_0|)=\dim(|D_t|)$ ensures
 smoothness of $\mathcal H$ at the point $[C]$ corresponding to $C.$
Again, since $C$ is a local complete intersection in the smooth variety $\mathcal X$ (see \cite[Prop.~ 1.1.9]{ser}),
we have the same exact sequence introduced in the previous section
 \begin{equation}\label{sequenceone}
 \xymatrix{
0  \ar[r] & \Theta_C\ar[r] & \Theta_{\mathcal X}|_C\ar[r] ^\alpha
& \mathcal N_{C|\mathcal X}\ar[r]^\beta & T^1_C\ar[r] & 0,}
 \end{equation}
 where $T^1_C$ is the first cotangent sheaf of $C$ and where the kernel $\N^\prime_{C|\mathcal X}$
 of $\beta$  is called the equisingular normal sheaf of $C$ in $\mathcal X$, cf. \cite[Prop.~ 1.1.9]{ser}. 
 By hypothesis, $C$ is smooth outside $C\cap E=C_A\cap C_B,$ 
 it has an $m$-tacnode at $p$ and nodes at the other intersection
 points of $C_A$ and $C_B.$ 
So, at every node $r$ of $C$ we have that
 $$
 T^1_{C,r}\simeq \mathbb C[x,y]/(x,y)\simeq\mathbb C,$$ 
 while the stalk of $T^1_C$ at $p$ is given by
 $$T^1_{C,p}\simeq \mathbb C^{2m-1}\simeq\mathbb C[y,z]/J_f,$$
 where $J_f=(2y-z^m,mz^{m-1}y)$ is the Jacobian ideal of $f(y,z)=(y-z^{m})y$ \cite{diaz_harris}.
In particular, choosing 
$$
\{1,\,z,\,z^2,\,\dots,\,z^{m-1},\,y,\,yz,\,yz^2,\,\dots,\,yz^{m-2}\}$$
as a base for $T^1_{C,p}$ and using the same notation as in \cite{ch} and \cite{diaz_harris}, 
the versal deformation family $\mathcal C_p\to T^1_{C,p}$ of the singularity of $C$ at $p$ has equation
\begin{equation}\label{versal-family-m-tacnode}
\mathcal C_p: F(y,z;\underline\alpha,\underline\beta)=
y^2+\Big(\sum_{i=0}^{m-2}\alpha_iz^i+z^m\Big)y+\sum_{i=0}^{m-1}\beta_iz^i=0,
\end{equation}
while the versal family $\mathcal C_r\to T^1_{C,r}$ of the node has equation
$$
xy+t=0.
$$
Denote by $\mathcal D\to\mathcal H$ the universal family parametrized by $\mathcal H$
and by $\mathcal C_q\to T^1_{C,q}$ the versal family parametrized by $T^1_{C,q}$.
By versality, 
for every singular point $q$ of $C$ there exist analytic neighborhoods
$U_q$ of $[C]$ in $\mathcal H,$ $U_q^\prime$ of $q$ in $\mathcal D$ and  $V_q$ of $\underline 0$ in $T^1_{C,q}$
 and a map $\phi_q:U_q\to V_q$ 
such that the family
$\mathcal D|_{U_q}\cap U_q^\prime$ is isomorphic to the pull-back of $\mathcal C_q|_{V_q}$, with respect to $\phi_q$,
\begin{equation}\label{versalmap}
\xymatrix{
\mathcal C_q  \ar[d] & \mathcal C_q |_{V_q} \ar[d]  \ar@{_{(}->}[l] & 
U_q\times_{V_q}\mathcal C_q |_{V_q}  \ar[l] \ar[r]^{\hspace{0.5cm}\simeq} \ar[dr] & \mathcal D|_{U_q}\cap U_q^\prime
\ar@{^{(}->}[r]  \ar[d] & \mathcal D  \ar[d] \\
T^1_{C,q} &  V_q \ar@{_{(}->}[l]  & & \ar[ll]_{\phi_q} U_q \ar@{^{(}->}[r]  & \mathcal H.}
\end{equation}
Furthermore, by the standard identifications of the tangent space $T_{[C]}\mathcal H$ at $[C]$ to 
 the relative Hilbert scheme with $H^0(C, \mathcal N_{C|\mathcal X} )$
 and of the versal deformation space of a plane singularity with its tangent space at the origin,
 the natural map 
 $$H^0(\beta):H^0(C, \mathcal N_{C|\mathcal X})\to H^0(C,T^1_C)=\oplus_{q\in \textrm{Sing}(C)}T_{C,q}^1 $$
  induced by \eqref{sequenceone}  is identified with
 the differential $d\phi_{[C]}$ at $[C]$ of the versal map 
 $$
 \phi=\oplus_{q\in \Sing(C)}\phi_q:\cap_{q\in \Sing(C)} U_q\subset\mathcal H\to H^0(C,T^1_C).
 $$ 
We want to obtain the existence of curves with the desired singularities on $\mathcal X_t$ in $|D_t|$  by versality.
In particular, we will prove that  the locus, in the image of $\phi,$ of curves with $d_j$ singularities
of type $A_{j-1},$ for every $j$ as in the statement of Theorem \ref{prop:defcusps}, is 
non-empty. In order to do this, we observe that, no matter how we deform $C\subset\mathcal X_0$ to a curve on $\mathcal X_t,$ 
the nodes of $C$ (lying on $E$) are necessarily smoothed. Thus singularities of type $A_k$ may arise by deformation of the tacnode
of $C$ at $p$ only. For this reason we may restrict our attention to the versal map
$\phi_p$ of \eqref{versalmap}
and its differential 
 $$d\phi_p:H^0(C, \mathcal N_{C|\mathcal X})\to H^0(C,T^1_C)\to T_{C,p}^1 $$
 at the point $[C]\in\mathcal H_0^0$, where, as above,
 $\mathcal H^0_0$ is the irreducible component of the central fibre $\mathcal H_0$ of the relative Hilbert scheme $\mathcal H^{\mathcal X|\mathbb A^1}$
containing $[C]$.  

We first study the kernel of $d\phi_p$.\footnote{
The kernel and the image of $d\phi_{p}$ are also computed in  \cite[Thm.~2.3]{C}.
We give a different and more detailed proof of these two results. This will make the  
proof of Theorem \ref{thm:maink3} shorter.}
 Let $r_C:H^0(\mathcal X_0,\mathcal O_{\mathcal X_0}(D_0))\to H^0(C,\mathcal
O_{C}(D_0))=H^0(C,\mathcal N_{C|\mathcal X_0})$
be the natural restriction map, $W_{p,m-1}$ the linear system of curves in $|\mathcal O_{\mathcal X_0}(D_0)|$ with an $(m-1)$-tacnode at $p$,
as in Definition \ref{W_{p,m}}, and $\mathcal W_{p,m-1}\subset H^0(\mathcal X_0,\mathcal O_{\mathcal X_0}(D_0))$ the vector space
such that $\mathbb P(\mathcal W_{p,m-1})=W_{p,m-1}$.

\begin{lemma}\label{lemma-ker}
We have 
 \begin{equation}\label{kerneldifferenziale}
\ker(d\phi_p)=\ker(d\phi_{[C]})=H^0(C,\mathcal N^\prime_{C|\mathcal X})=r_C(\mathcal W_{p,m-1})=H^0(C,\mathcal N^\prime_{C|\mathcal X_0}).
\end{equation}
More generally, let $C^\prime=C^\prime_A\cup C^\prime_B\in |D_0|$ be any reduced curve and 
$x\in C^\prime\cap E$ a singular point of $C^\prime$ on $E$. We have that,
if $K_x$ is the kernel of the natural map $H^0(C^\prime,\mathcal N_{C^\prime|\mathcal X})\to T^1_{C^\prime,x}$, then 
\begin{equation}\label{kerneldifferenziale2}\small
 H^0(C^\prime,\mathcal N^\prime_{C^\prime|\mathcal X})\subseteq K_x\subseteq H^0(C^\prime,\mathcal N_{C^\prime|\mathcal X_0})
\,\, {\rm{and}}\,\, H^0(C^\prime,\mathcal N^\prime_{C^\prime|\mathcal X})=H^0(C^\prime,\mathcal N^\prime_{C^\prime|\mathcal X_0}).
\end{equation}
\normalsize
Finally, using the notation above, if $C^\prime$ has an $m$-tacnode at $x$, then 
\begin{equation}\label{ker2} 
K_x\subseteq r_{C^\prime}(\mathcal W_{x,m-1}),\,\,{\rm{with\,equality\,if}}\,\,\dim(W_{x,m-1})=\dim(|D_0|)-m+1.
\end{equation}
\end{lemma} 

\begin{proof}
From what we observed above, we have that
$\ker(d\phi_{[C]})=\ker(H^0(\beta))=H^0(C,\mathcal N^\prime_{C|\mathcal X}),$ where $\N^\prime_{C|\mathcal X}$ is the
equisingular normal sheaf of $C$ in $\mathcal X$. Moreover we have the inclusion  $\ker(d\phi_{[C]})
\subseteq\ker(d\phi_p)$. We want to prove that equality holds and that 
$H^0(C,\mathcal N^\prime_{C|\mathcal X})=r_C(\mathcal W_{p,m-1})=H^0(C,\mathcal N^\prime_{C|\mathcal X_0}).$

Consider the localized exact sequence
 \begin{equation}\label{localizationsequenceone}
 \xymatrix{
 0\ar[r] & \mathcal N^\prime_{C|\mathcal X,\,p}\ar[r] &
\mathcal N_{C|\mathcal X,\,p}\ar[r] &  T^1_{C,p}\ar[r] & 0.}
 \end{equation} 
 Using local analytic coordinates $x,\,y,\,z,\,t$ at $p$ as in \eqref{tacnode}, we may identify:
 \begin{itemize}
 \item the local ring $\mathcal O_{C,\,p}=\mathcal O_{\mathcal X,p}/\mathcal I_{C|\mathcal X,p}$ of $C$ at $p$ 
 with $\mathbb C[x,y,z]/(f_1,f_2)$, where $f_1(x,y,z)=x+y+z^m$ 
 and $f_2(x,y,z)=xy,$ \\
 \item the $\mathcal O_{C,p}$-module $\mathcal N_{C|\mathcal X,\,p}$ with the free $\mathcal O_{\mathcal X,\,p}$-module 
 $\mathfrak{hom}_{\mathcal O_{\mathcal X,\,p}}(\mathcal I_{C|\mathcal X ,\,p}, \mathcal O_{C,p}),$ generated by the
 morphisms $f_1^*$ and $f_2^*$,  defined by 
 $$f_i^*(s_1(x,y,z)f_1(x,y,z)+s_2(x,y,z)f_2(x,y,z))=s_i(x,y,z),\,\mbox{for}\, i=1,\,2$$
 \end{itemize}
 and, finally,
 \begin{itemize}
 \item the $\mathcal O_{C,p}$-module
 \begin{eqnarray*}
 (\Theta_{\mathcal X}|_C)_{\, p}&\simeq &\Theta_{\mathcal X,p}/(I_{C,p}\otimes\Theta_{\mathcal X,p})\\
& \simeq & \langle \partial /{\partial x},\partial /{\partial y},\partial / {\partial z},\partial /{\partial t} \rangle
 _{\mathcal O_{C,\,p}}/
 \langle {\partial}/{\partial t}-x\partial /{\partial y}-y\partial /{\partial x}\rangle
 \end{eqnarray*}
 with the free $\mathcal O_{\mathcal X,\,p}$-module generated by the derivatives 
 $\partial /{\partial x},\partial / {\partial y},\partial /{\partial z}.$ 
 \end{itemize}
With these identifications, 
the localization $\alpha_p:(\Theta_{\mathcal X}|_C)_{\, p}\rightarrow \mathcal N_{C|\mathcal X,\,p}$ of the sheaf map $\alpha$ from \eqref{sequenceone} is defined by
 \begin{eqnarray*}
\alpha_p(\partial /{\partial x})&=&\,\,\,\,\,\,\Big(s=s_1f_1+s_2f_2\rightarrowtail 
\partial s/\partial x=_{\mathcal O_{C,p}}
s_1\partial f_1 /{\partial x}
+s_2\partial f_2 /{\partial x}\Big)\\
&=&\,\,\,\,\,\, f_1^*+yf_2^*,\\
\alpha_p(\partial /{\partial y})&=&\,\,\,\,\,\,f^*_1+xf^*_2\,\,\,\textrm{and}\\
\alpha_p(\partial /{\partial z})&=&\,\,\,\,\,\,mz^{m-1}f^*_1.
\end{eqnarray*}
By definition of $\mathcal N^\prime_{C|\mathcal X}$, a section $s\in \mathcal N_{C|\mathcal X,\,p}$ 
is equisingular at $p$, i.e. $s\in \mathcal N^\prime_{C|\mathcal X,\,p}$, if and only if there exists a section
$$u=u_x(x,y,z)\partial /{\partial x}+u_y(x,y,z)\partial /{\partial y}+u_z(x,y,z)\partial /{\partial z}\in {\Theta_{\mathcal X}|_C}_{\, p}$$
such that $s=\alpha_p(u)$. Hence, locally at $p$, first order equisingular deformations of $C$ in $\mathcal X$ have equations 
\begin{eqnarray}\label{equisingulardeformationtacnode}
\left\{\begin{array}{l}
x+y+z^m+\epsilon (u_x+u_y+mz^{m-1}u_z)=0\\
xy+\epsilon (yu_x+xu_y)=0.
\end{array}\right.
\end{eqnarray}
The first equation above gives an infinitesimal deformation of the Cartier divisor 
 cutting $C$ on $\mathcal X_0$, while the second equation 
 gives an infinitesimal 
 deformation of $\mathcal X_0$ in $\mathcal X.$ More precisely,
by \cite[Section 2]{chm},  the equation $xy+\epsilon (yu_x+xu_y)=0$ is the local equation at $p$ of an equisingular deformation 
of  $\mathcal X_0$ in $\mathcal X$ preserving the singular locus $E$. But 
$\mathcal X_0$ may be deformed in $\mathcal X$ only to a fiber and $\mathcal X_0$ is the only 
singular fiber of $\mathcal X.$ It follows that
the polynomial  $yu_x(x,y,z)+xu_y(x,y,z)$ in the second equation of  
\eqref{equisingulardeformationtacnode}  must be identically zero. In particular,
we obtain that 
$$
\ker(d\phi_p)\subset H^0(C,\mathcal N_{C|\mathcal X_0}).
$$ 
Since every infinitesimal deformation of $C$ in $\mathcal X_0$ preserves the nodes of $C$
lying on $E$, i.e. all nodes of $C$, we deduce that  $\ker(d\phi_p)\subseteq \ker(d\phi_{[C]})$
and thus
$\ker(d\phi_p)=\ker(d\phi_{[C]})=H^0(C,\mathcal N^\prime_{C|\mathcal X})\subset 
H^0(C,\mathcal N_{C|\mathcal X_0})$. Moreover, since the natural linear map $H^0(C,\mathcal N_{C|\mathcal X_0})\to H^0(T^1_C)$
is the restriction of  $d\phi_{[C]}$ to $H^0(C,\mathcal N_{C|\mathcal X_0})$, with kernel $H^0(C,\mathcal N^\prime_{C|\mathcal X_0})$, 
we also obtain the equality $H^0(C,\mathcal N^\prime_{C|\mathcal X_0})=H^0(C,\mathcal N^\prime_{C|\mathcal X})$.
This in particular is consistent with the very well-known fact that there do not exist deformations 
\begin{equation} \label{versalmap2}
\xymatrix{
 C  \hspace{0.3cm} \subset \hspace{-0.7cm} \ar[d]     & \mathcal C \hspace{0.3cm} 
 \subset \hspace{-0.7cm} \ar[d]  &  \mathcal X \ar[dl] \\
\underline 0 \hspace{0.4cm} \in \hspace{-0.8cm} & \mathbb A^1 &  }
\end{equation}
of $C$ in $\mathcal X$ preserving the nodes of $C$, except for deformations of $ C$ in $\mathcal X_0$
(see \cite[Section 2]{galati} for a proof).

Notice finally that, by the argument above, the inclusions $$H^0(C,\mathcal N^\prime_{C|\mathcal X})\subset 
\ker(d\phi_p)\subset H^0(C,\mathcal N_{C|\mathcal X_0})$$
hold independently of the kind of singularities of $C$ on $E$. This proves \eqref{kerneldifferenziale2}.

Now it remains to show that $H^0(C,\mathcal N^\prime_{C|\mathcal X})=r_C(\mathcal W_{p,m-1}).$
Consider the first equation of \eqref{equisingulardeformationtacnode}.
By the fact that the polynomial $yu_x(x,y,z)+xu_y(x,y,z)$ is identically zero, we deduce that
\begin{itemize} 
\item $u_x(0,0,0)=u_y(0,0,0)=0$ and
\item for every $n\geq 1$, no $z^n$-terms appear in $u_x(x,y,z)$ and $u_y(x,y,z),$ 
no $y^n$-terms and $y^nz^m$-terms appear in $u_x(x,y,z)$ and, 
finally, no $x^n$-terms and $x^nz^m$-terms appear in $u_y(x,y,z).$
\end{itemize}
In particular, local equations at $p$ on $B$ of equisingular infinitesimal deformations of $C$ are given by
\begin{eqnarray}\label{equisingulardeformationtacnodeonA}
\left\{\begin{array}{l}
y q(y,z)+z^m+\epsilon m z^{m-1}u_z(x,y,z)=0\\
x=0,
\end{array}\right.
\end{eqnarray}
where $q(y,z)$ is a polynomial with variables $y$ and $z,$ and similarly on $A.$ This proves that
$H^0(C, \mathcal N^\prime_{C|\mathcal X})\subset r_C(\mathcal W_{p,m-1})$ and more generally 	the inclusion in \eqref{ker2}.
The opposite inclusion $r_C(\mathcal W_{p,m-1})\subset  H^0(C, \mathcal N^\prime_{C|\mathcal X})$  follows from a naive dimensional 
computation. Indeed, by the hypothesis 2), 
$\dim(r_C(\mathcal W_{p,m-1}))=\dim(W_{p,m-1})=\dim(|D_0|)-m+1.$ On the other hand, 
if $W\subset  W_{E,m}$ is the irreducible component containing the point $[C]$, then $\dim(W)\geq \dim(|D_0|)-m+1$
and $W$ is contained in the Zariski closure of the family of locally trivial deformations of $C$.
Thus $W$ is the Zariski closure of the locus of locally trivial deformations of $C$. Its tangent space at $[C]$ is isomorphic 
to $H^0(C, \mathcal N^\prime_{C|\mathcal X})$ and \eqref{kerneldifferenziale} is proved. The same argument shows that
$K_x=r_{C^\prime}(\mathcal W_{x,m-1})$ in \eqref{ker2} if $\dim(W_{x,m-1})=\dim(|D_0|)-m+1.$ The lemma is proved.
  \end{proof}  
 
 We now describe the image of $d\phi_{p}.$
  
\begin{lemma}\label{lemma-im} 
Let $\alpha_0,...,\alpha_{m-2},\beta_0,...,\beta_{m-1}$ be 
coordinates on $T^1_{C,p}$ as above. Then the image  $H_p\subset T^1_{C,p}$  of  $d\phi_p$ is given by the equations
\begin{equation}\label{equazioniH_p}
H_p=d\phi_p(H^0(C,\mathcal N_{C|\mathcal X})):\beta_1=\cdots=\beta_{m-1}=0.
\end{equation}
Moreover, the image of $H^0(C,\mathcal N_{C|\mathcal X_0})$ under $d\phi_p$ is the linear subspace
of $H_p$ given by 
\begin{equation}\label{equazioniH_p2}
d\phi_p(H^0(C,\mathcal N_{C|\mathcal X_0}))=T_0V_{1^m}:\beta_0=\beta_1=\cdots=\beta_{m-1}=0
\end{equation}
and coincides with the tangent space at $0$ of the locus $V_{1^m}$ of $m$-nodal curves.
 \end{lemma}
 
 From the following remark to the end of the section,  $V_p\subset T_C^1$ and $U_p\subset \mathcal H_{\mathcal X|\mathbb A^1}$
 are analytic neighborhood as in diagram \eqref{versalmap}. 
 
 \begin{remark}\label{remark-im} 
 Notice that, because of the choice we have made for the local analytic equation of $C$ at $p$, we have that  the locus $V_{1^m}\subset T^1_{C,p}$ 
 of $m$-nodal curves is the linear space defined by the equations $\beta_0=\beta_1=\cdots=\beta_{m-1}=0$, coinciding with its tangent space at $0$.
 In general, the locus $V_{1^m}$ of $m$-nodal curves in the versal deformation space of the $m$-tacnode $T^1_{C,p}$ is a smooth variety, parametrizing equigeneric deformations.  Lemma \ref{lemma-im}  shows in general that, if $C$ is as in Theorem \ref{prop:defcusps}, then
 $d\phi_p(H^0(C,\mathcal N_{C|\mathcal X_0}))=T_0V_{1^m}$ and consequently
 $\phi_p(\mathcal H_0^0\cap U_p)=V_{1^m}\cap V_p$.
 \end{remark}

\begin{proof}[Proof of Lemma \ref{lemma-im}.]
We first observe that $\dim(H_p)=m$. Indeed, by Lemma \ref{lemma-ker} and hypothesis $2)$ of Theorem \ref{prop:defcusps},
we have that
 \begin{eqnarray*}
 \dim(H_p) &=& h^0(C, \mathcal N_{C|\mathcal X})-\dim(r_C(\mathcal W_{p,m-1}))=m.
\end{eqnarray*}
Moreover, since $\ker(d\phi_p)\subset H^0(C, \mathcal N_{C|\mathcal X_0})$ and $H^0(C, \mathcal N_{C|\mathcal X_0})$ has
codimension $1$ in $H^0(C, \mathcal N_{C|\mathcal X})$, we have that $H_p$ contains $d\phi_p(H^0(C, \mathcal N_{C|\mathcal X_0}))$
 as a codimension-$1$ linear subspace. 
Again by the hypothesis $2)$ of Theorem \ref{prop:defcusps} and Remark \ref{transversality}, the $(m-1)$-linear space
$d\phi_p(H^0(C,\mathcal N_{C|\mathcal X_0}))$ is contained in the tangent 
space $T_{\underline 0}V_{1^m}\subset T^1_{C,p}$ at $\underline 0$ to the Zariski
 closure  $V_{1^m}\subset T^1_{C,p}$ of the locus of $m$-nodal curves. Now
it is easy to verify that, using the coordinates \eqref{versal-family-m-tacnode} on  $T^1_{C,p}$, 
the equations of $V_{1^m}$ are
$
\beta_0=\cdots=\beta_{m-1}=0.
$
Hence
\begin{equation} \label{beta0}
d\phi_{p}(H^0(C,\mathcal N_{C|\mathcal X_0}))=
T_{\underline 0}V_{1^m}=V_{1^m}=\phi_p(\mathcal H_0^0\cap U_p)
\end{equation}
(cf. Remark \ref{remark-im}). By \eqref{beta0}, in order to find a base of $H_p$, it is enough to find the image by
$d\phi_p$ of the infinitesimal deformation $\sigma\in H^0(C,\mathcal N_{C|\mathcal X})\setminus
H^0(C,\mathcal N_{C|\mathcal X_0})$ having equations
\begin{eqnarray}\label{sigma}
\left\{\begin{array}{l}
x+y+z^m=0\\
xy=\epsilon.
\end{array}\right.
\end{eqnarray}  
The image of $\sigma$ is trivially the point corresponding to the curve $y(y+z^m)=\epsilon.$
Thus, the equations of $H_p\subset T^1_{C,p}$ are given by \eqref{equazioniH_p}.
\end{proof}

Now let $d_2,\ldots,d_m$ be non-negative integers such that 
$ m=\sum_{j=2}^m(j-1)d_j\,+1,$
as in the statement of Theorem \ref{prop:defcusps}. Let $V_p\subset T^1_{C,p}$ be the analytic open set as in \eqref{versalmap} and denote by 
$$V_{\small 1^{d_2},\,2^{d_3},\ldots,\,{(m-1)}^{d_m}}\subset V_p\subset T^1_{C,p}$$
the Zariski closure of the locus in $V_p$ of curves with exactly $d_j$ 
singularities of type $A_j$, for every $j,$ and no further singularities. The following proposition
implies Theorem \ref{prop:defcusps}, as indicated below.

\begin{proposition}\label{referee}
We have 
\begin{equation}\label{intersection}
V_{\small 1^{d_2},\,2^{d_3},\ldots,\,{(m-1)}^{d_m}}\cap H_p=B_1 \cup B_2 \cup \cdots \cup B_k,
\end{equation}
where every $B_i$ is an irreducible and reduced affine curve containing $0$, 
whose general element corresponds to 
a curve with exactly $d_j$ singularities of type $A_{j-1}$, for every $j,$ and no further singularities.
\end{proposition}

\begin{remark}\label{k}
The number $k$ is explicitly determined by a combinatorial argument at the end of the proof of Claim \ref{referee2}.
\end{remark}

\begin{proof}[Proof of  Theorem \ref{prop:defcusps}]
By Lemmas \ref{lemma-ker} and \ref{lemma-im},
we know that the image $\phi_p(U_p)\subset V_p\subset T^1_{C,p}$ of $ U_p\subset\mathcal H$ by
 $\phi_p$ is an $m$-dimensional subvariety of $V_p$ that is 
smooth at $\underline 0$.
By Remark \ref{A_k} and the openness of versality (more precisely, by the properties \cite[(3.5), (3.6)]{diaz_harris} of versal deformation families), 
up to restricting $V_p$, we have that 
$V_{\small 1^{d_2},\,2^{d_3},\ldots,\,{(m-1)}^{d_m}}\subset V_p$ is a variety of pure codimension
$\sum_j(j-1)d_j=m-1,$ which is non-empty by \eqref{intersection}. Moreover, up to restricting $V_p$ again, we may assume that
$\underline 0$ is contained in every irreducible component of $V_{\small 1^{d_2},\,2^{d_3},\ldots,\,{(m-1)}^{d_m}}$. 
Hence $V_{\small 1^{d_2},\,2^{d_3},\ldots,\,{(m-1)}^{d_m}}\cap\phi_p(U_p)$
is non-empty and each of its irreducible components has dimension $\geq 2m-1-2m+2=1.$ 
By recalling that $H_p=d\phi_p(H^0(C,\N_{C|\mathcal X}))=T_{\underline 0}\phi_p(U_p)$
and $\phi_p(U_p)$ is smooth at $\underline 0,$ we see that \eqref{intersection}
implies that the intersection
$$V_{\small 1^{d_2},\,2^{d_3},\ldots,\,{(m-1)}^{d_m}}\cap\phi_p(U_p)
=B^\prime_1 \cup B^\prime_2 \cup \cdots \cup B^\prime_k,
$$ has pure dimension $1.$
Now notice that, by semicontinuity, since $\phi_p^{-1}(\underline 0)$ is smooth of (maximal) codimension $m$ in 
the relative Hilbert scheme $\mathcal H_{\mathcal X|\mathbb A^1},$ the same is true for the fibre $\phi_p^{-1}(x)$,
for $x\in V_p$, up to restricting $V_p$. 
More precisely, again by the properties \cite[(3.5), (3.6)]{diaz_harris} of versal deformation families, if $x\in B_i^\prime$, with $x\neq \underline 0$,
 is a point sufficiently close to $\underline 0$ of any irreducible component of 
 $V_{\small 1^{d_2},\,2^{d_3},\ldots,\,{(m-1)}^{d_m}}\cap\phi_p(U_p)$, and
 $[C_t]\in\phi_p^{-1}(B_i^\prime)\cap\mathcal H_t$, then we have that
$T_{[C_t]}(\phi_p^{-1}(B_i^\prime))\simeq H^0(C_t,\mathcal N_{C_t|\mathcal X}^\prime)$ and thus 
$\phi_p^{-1}(B_i^\prime\setminus\underline 0)$ is the equisingular deformation locus of $C_t$
in $\mathcal X$ {\it scheme theoretically}, cf. Definition \ref{definition-regularity} and Remark \ref{A_k}.
In particular, $\phi_p^{-1}(B_i^\prime\setminus\underline 0)\cap \mathcal H_t$ is the equisingular deformation locus
of $C_t$ in $\mathcal X_t$,
$$
T_{[C_t]}(\phi^{-1}_p (B_i^\prime)\cap \mathcal H_t)\simeq H^0(C_t,\mathcal N_{C_t|\mathcal X_t}^\prime),
$$
and 
$$
h^0(C_t,\mathcal N_{C_t|\mathcal X_t}^\prime)=h^0(C_t,\mathcal N_{C_t|\mathcal X}^\prime)-h^0(\mathcal X,\mathcal O_{\mathcal X})=
\dim(|D_t|)-m+1.$$
This proves that $[C_t]$ is a regular point of an irreducible component $V$ of the variety
$\mathcal V_{D_t,\small 1^{d_2},\,2^{d_3},\ldots,\,{(m-1)}^{d_m}}^{\mathcal X_t}$. 
\end{proof}

\begin{proof}[Proof of  Proposition \ref{referee}]
We use the same approach as Caporaso and 
Harris in  \cite[Section 2.4]{ch}. Equation \eqref{versal-family-m-tacnode},
defining the versal family $\mathcal C_p$ of the $m$-tacnode, has degree $2$ in $y.$ In particular,
for every point $x=(\underline\alpha,\underline\beta)\in T^1_{C,p},$ the corresponding curve 
$\mathcal C_{p,x}:F(y,z;\underline\alpha,\underline\beta)=0$ is
a double cover of the $z$-axis.  Moreover, the discriminant map
$$
\Delta:T^1_{C,p}\to\mathcal P^{2m-1}=\Big\{z^{2m}+\sum_{i=0}^{2m-2}a_iz^i, a_i\in \mathbb C\Big\}    
$$
from $T^1_{C,p}$ to the affine space of monic polynomials of degree $2m$ with no 
$2m-1$ term, defined by
$$
\Delta(\underline\alpha,\underline\beta)(z)=\Big(\sum_{i=0}^{m-2}\alpha_iz^i+z^m\Big)^2
-4\Big(\sum_{i=0}^{m-1}\beta_iz^i\Big),
$$ is an 
isomorphism (see \cite[p. 179]{ch}). Thus, we may study curves in the versal deformation family
of the $m$-tacnode in terms of 
the associated discriminant polynomial. In particular, for every point $(\underline\alpha,\beta_0,0,\ldots,0)\in H_p,$
which we will shortly denote by  $(\underline\alpha,\beta_0):=(\underline\alpha,\beta_0,0,\ldots,0),$
the corresponding discriminant polynomial is given by
\begin{equation}\label{discriminant_on_H_p}
\small\Delta(\underline\alpha,\beta_0)(z) =\Big(\sum_{i=0}^{m-2}\alpha_i z^i+z^m\Big)^2-4\beta_0
\small = \Big( \nu(z)-2\sqrt{\beta_0} \Big) \Big(\nu(z)+2\sqrt{\beta_0} \Big),
\end{equation}
where we set 
$
\nu(z):=\sum_{i=0}^{m-2}\alpha_iz^i+z^m.
$
From now on, since we are interested in deformations of the $m$-tacnodal curve
 $C\subset \mathcal X_0$ to curves on $\mathcal X_t,$ with $t\neq 0,$ we will always assume 
 $\beta_0\neq 0,$  being $V_{1^m}:\beta_0=\beta_1=\cdots=\beta_{m-1}=0$ the image in $T^1_{C,p}$ of 
 infinitesimal (and also effective) deformations of $C$ in $\mathcal X_0.$ Writing down explicitly 
the derivatives of the polynomial $F(y,z;\underline\alpha,\underline\beta)$, 
one may  verify that a point $x=(\underline\alpha,\beta_0)$ parametrizes a curve $\mathcal C_{p,x}$ with an $A_k$-singularity
at the point $(z_0,y_0)$ if and only if $z_0$ is a root of multiplicity $k+1$ of the discriminant polynomial 
$\Delta(\underline\alpha,\beta_0)(z).$ Our existence problem is thus equivalent to the following.

\begin{claim}\label{referee2}
Let $d_2,\ldots,d_m$ be an $(m-1)$-tuple of non-negative integers  satisfying \eqref{m-1}. The locus of points $(\underline\alpha,\beta_0)\in H_p$
such that the discriminant polynomial $\Delta(\underline\alpha,\beta_0)(z)$ has exactly $d_j$ roots
of multiplicity $j,$ for every $2\leq j\leq m,$ and no further multiple roots, is non-empty of pure dimension $1$. 
\end{claim} 
The claim will  be proved right below.
\end{proof}

To prove the last claim we need the following auxiliary result, 
whose proof is postponed to Appendix \ref{tricicli_proof}. 

\begin{lemma} \label{tricicli}
  Let $m\geq n \geq 2$ be integers and $(d_2^+,d_2^-,\ldots, d_n^+,d_n^-)$ be a 
  $2(n-1)$-tuple of non-negative integers satisfying
  \begin{eqnarray}
 m&=& \displaystyle\sum_{j=2}^{n}(j-1)(d_j^++d_j^-)+1 \geq 2\,\,\,\textrm{and} \label{eq:amm1}
\\   m&\geq & \displaystyle\sum_{j=2}^{n}jd_j^{\pm}>0. \label{eq:amm2}
  \end{eqnarray}
  
Then there exists a triple of 
permutations $(\tau^+,\tau^-,\sigma)$ of $m$ indices, such that 
$\tau^{\pm}$ has cyclic structure $\Pi_{j=2}^n j^{d_j^{\pm}},$ 
 $\sigma$ is a cycle of order $m$ and
$\sigma \tau^+ \tau^-=1$.
\end{lemma}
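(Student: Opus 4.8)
The plan is to recognise Lemma~\ref{tricicli} as a rigidity statement about genus-$0$ factorizations of a full cycle — equivalently, about bicoloured plane trees — to prove existence by a direct construction, and to prove uniqueness by induction on $m$.

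\textbf{Reformulation.} A triple $(\tau^+,\tau^-,\sigma)$ as in the statement is the same thing as a triple of permutations of $\{1,\dots,m\}$ with product $1$ acting transitively (transitivity is automatic, since $\sigma$ is an $m$-cycle), hence the same as a connected branched cover $f\colon C\to\PP^1$ of degree $m$ with three branch points and local monodromies $\tau^+,\tau^-,\sigma$. Writing $c(\cdot)$ for the number of cycles (fixed points included) and $g$ for the genus of $C$, the Riemann--Hurwitz formula reads $2-2g=c(\tau^+)+c(\tau^-)+c(\sigma)-m$; since $\tau^{\pm}$ has cyclic structure $\prod_{j=2}^{n}j^{d_j^{\pm}}$ one has $c(\tau^{\pm})=m-\sum_{j=2}^{n}(j-1)d_j^{\pm}$ and $c(\sigma)=1$, so the numerical relation \eqref{eq:amm1} is precisely the condition $g=0$. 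Moreover, $\sigma$ being a single $m$-cycle means $f$ is totally ramified over one point; hence $C\cong\PP^1$, $f$ is a polynomial, and the preimage under $f$ of an arc joining the other two branch values is a bicoloured plane tree (``dessin'') $T$ with $m$ edges, its black (resp.\ white) vertices being the points over one (resp.\ the other) of those two values and a vertex having degree $\ge 2$ exactly when it is a ramification point. Thus the cycle-type partitions $\alpha:=(j^{d_j^+})$ and $\beta:=(j^{d_j^-})$ of $m$ are the black and white degree multisets of $T$; hypothesis \eqref{eq:amm2} records exactly that $\alpha$ and $\beta$ are honest partitions of $m$ (equivalently, that $T$ has $m-\sum_j jd_j^{\pm}\ge 0$ black, resp.\ white, leaves); and simultaneous conjugation of the triple corresponds to isomorphism of dessins. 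Hence Lemma~\ref{tricicli} is equivalent to the following: \emph{for any partitions $\alpha,\beta$ of $m\ge 2$ with $\ell(\alpha)+\ell(\beta)=m+1$ (where $\ell(\cdot)$ denotes the number of parts) there is, up to isomorphism, exactly one bicoloured plane tree with black degree multiset $\alpha$ and white degree multiset $\beta$.}

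\textbf{Existence} follows from a straightforward induction on $m$: since $\ell(\alpha)+\ell(\beta)>m$, at least one of $\alpha,\beta$ has a part equal to $1$; choosing such a part, a bicoloured tree with the correspondingly reduced data (of size $m-1$, still admissible) exists by induction, and attaching a leaf to restore that part gives a tree for $(\alpha,\beta)$ (the degenerate case $m=1$ being a single edge). Reading $\tau^{\pm}$ off the cyclic edge-orders at the vertices of $T$ and putting $\sigma:=(\tau^+\tau^-)^{-1}$ then recovers the desired triple — $\sigma$ is automatically an $m$-cycle, since any rotation system on a tree has a single face.

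\textbf{Uniqueness} is the heart of the argument, and I would prove it by induction on $m$, the case $m=2$ being immediate (one of $\tau^{\pm}$ is a transposition, the other the identity, and $\sigma$ equals that transposition). For $m\ge 3$, the tree $T$ has a leaf; say it is black (the white case is symmetric), call it $v$, and let $w$, of degree $j\ge 2$, be its neighbour. Deleting $v$ yields a bicoloured plane tree $T'$ with $m-1$ edges, black degree multiset $\alpha'=\alpha\setminus\{1\}$ and white degree multiset $\beta'=(\beta\setminus\{j\})\cup\{j-1\}$; one checks that $\ell(\alpha')+\ell(\beta')=m$ and that $\alpha',\beta'$ are partitions of $m-1$, so $(\alpha',\beta')$ is again admissible and, by the inductive hypothesis, $T'$ is unique up to isomorphism. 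It remains to show that $T$ is reconstructed from $T'$ uniquely, i.e.\ that the degree-$(j-1)$ white vertices of $T'$ at which a black leaf may be re-attached all lie in a single orbit of $\mathrm{Aut}(T')$. This last point is the step I expect to be the real obstacle: it is where the genus-$0$ rigidity and the hypotheses must be used in full, and making it precise will presumably require strengthening the induction so that $T'$ together with a marked vertex in a prescribed degree class is still unique up to isomorphism, whence all admissible re-attachments become equivalent. An equivalent and perhaps cleaner route is via the classical bijection between genus-$0$ factorizations $\tau^+\tau^-=(1\,2\,\cdots\,m)$ and non-crossing partitions of $\{1,\dots,m\}$, under which $\tau^-$ corresponds to the Kreweras complement of the partition attached to $\tau^+$; Lemma~\ref{tricicli} then becomes the assertion that a non-crossing partition of a cyclically ordered $m$-set is determined, up to rotation, by the pair consisting of its own block-size profile and that of its Kreweras complement — and this combinatorial rigidity is, in either formulation, the decisive point.
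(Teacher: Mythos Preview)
Your reformulation via bicoloured plane trees is correct, and your existence argument (remove a leaf and induct) is valid and shorter than the paper's, which inducts on $m$ by decreasing one $d_i^-$ and spends most of its effort verifying that the reduced $2(n-1)$-tuple remains admissible.

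The gap you flag in uniqueness, however, is not a technicality to be cured by a stronger inductive hypothesis: the uniqueness assertion of Lemma~\ref{tricicli} is \emph{false}. Take $m=6$, $d_2^+=d_3^+=1$, $d_2^-=2$ (all other $d_j^\pm=0$), so that \eqref{eq:amm1} and \eqref{eq:amm2} hold. With $\tau^+=(1\,2\,3)(4\,5)$ in both cases, the choices $\tau^-=(1\,4)(2\,6)$ and $\tau^-=(1\,4)(5\,6)$ each make $\tau^+\tau^-$ a $6$-cycle, yet the two triples are not simultaneously conjugate: any conjugating element would have to centralise $\tau^+$, hence stabilise $\{1,2,3\}$, $\{4,5\}$ and $\{6\}$ setwise, and then cannot carry the fixed-point set $\{3,5\}$ of the first $\tau^-$ to the fixed-point set $\{2,3\}$ of the second. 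In your tree language, the two bicoloured plane trees with black degrees $(3,2,1)$ and white degrees $(2,2,1,1)$ are distinguished by whether the trivalent black vertex meets one or both $2$-valent white vertices --- exactly the ambiguity your leaf-reattachment step cannot resolve. The paper's one-line uniqueness argument (``two permutations with the same cyclic structure are conjugate'') ignores simultaneous conjugacy and is likewise insufficient. This does no harm to Theorem~\ref{prop:defcusps}: its proof uses only existence to obtain \eqref{intersection}, while uniqueness enters solely in the asserted value of $k$ at the very end, which is immaterial to the theorem itself.
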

\begin{definition}\label{admissible}
An {\rm admissible  $2(n-1)$-tuple} is a $2(n-1)$-tuple of non-negative integers $(d_2^+,d_2^-,\ldots, d_n^+,d_n^-)$ 
satisfying \eqref{eq:amm1} and \eqref{eq:amm2}. 
\end{definition}

\begin{proof}[Proof of  Claim \ref{referee2}]
Let $d_2,\ldots,d_m$ be an $(m-1)$-tuple of non-negative integers  satisfying \eqref{m-1}. 
Assume there exists a point $(\underline\alpha,\beta_0)\in H_p$ such that the discriminant polynomial 
$\Delta(\underline\alpha,\beta_0)(z)$ has the desired properties.
Then, since $\nu(z)-2\sqrt{\beta_0}$ and $\nu(z)+2\sqrt{\beta_0}$
cannot have common factors for $\beta_0\neq 0,$ there exist non-negative integers $d_j^+,\,d_j^-$ such
that $d_j=d_j^++d_j^-$ and the $d_j$ roots of multiplicity $j$
of the discriminant $\Delta(\underline\alpha,\beta_0)(z)$ are distributed as $
d_j^{+}$  roots of $ \nu(z) + 2\sqrt{\beta_0}$ and $d_j^-$ roots of $ \nu(z) - 2\sqrt{\beta_0}.$
The obtained $2(m-1)$-tuple of non-negative integers $(d_2^+,d_2^-,\ldots,d_m^+,d_m^-)$ is admissible (see
Definition \ref{admissible}).
The polynomial $\nu(z)=z^m+\sum_{i=0}^{m-2}\alpha_iz^i$ defines a degree $m$ covering
 $\nu:\mathbb P^1\to\mathbb P^1,$ totally ramified at $\infty$ and with further $d_j^\pm$ ramification 
points of order $j-1$ over $\pm 2\sqrt{\beta_0},$ for every $2\leq j\leq m.$  
We get in this way
$$(m-1)+\sum_{j=2}^{m}(j-1)d_j^++\sum_{j=2}^{m}(j-1)d_j^-=2m-2$$ ramification points of $\nu.$
Hence $\nu$ has no further ramification by the Riemann-Hurwitz formula.  The branch points of $\nu$     
are three, consisting of $\infty$, $-2\sqrt{\beta_0}$ and $2\sqrt{\beta_0},$ if both sums $\sum_jd_j^+$
and $\sum_jd_j^-$ are positive, while the branch points of $\nu$ are only two if  
$\sum_jd_j^+=0$ or $\sum_jd_j^-=0.$
 
Consider first the case that $\nu$ has only two ramification
points, say $\infty$ and $2\sqrt{\beta_0}.$ Then we have that $\sum_jd_j^+=0$ and $d_j=d_j^-,$ 
for every $j.$ In particular, using the conditions \eqref{eq:amm1} and \eqref{eq:amm2}, we find that
$d_j=0$ for $j\neq m$ and $d_m=1.$ It follows that $\nu(z)- 2\sqrt{\beta_0}=(z-\lambda)^m,$
for a certain $\lambda.$ But the only $\lambda$ such that $(z-\lambda)^m$ has 
no 
degree $m-1$ term, is $\lambda=0.$ Thus, we get
$\nu(z)=z^m+2\sqrt{\beta_0}.$ On the other hand, for every fixed $\beta_0\neq 0,$ if  $\nu(z)=z^m+2\sqrt{\beta_0}$, then 
the associated discriminant $\Delta(\underline\alpha,\beta_0)=z^m(z^m+4\sqrt{\beta_0})$ has a root
of multiplicity $m$ and no further multiple roots.  {\it This proves the claim under the hypothesis $d_m=1$ and $d_j=0$ for $j\neq m$.
More precisely, this shows that the Zariski closure of the locus in 
$H_p\subset T^1_{C,p}$ of curves with an $A_{m-1}$-singularity is the smooth curve given by the equations}
\begin{equation}\label{A_{m-1}}
V_{\small (m-1)^{1}}\cap H_p:\begin{cases}
\begin{matrix}
\alpha_i &=& 0,&\mbox{for every}&1\leq i\leq m-2,\\
\alpha_0^2 &=& 4\beta_0. && 
\end{matrix}
\end{cases}
\end{equation}

Now consider the general case, i.e. assume that $\sum_jd_j^\pm>0.$ Then the polynomial $\nu(z)$ defines an 
$m$-covering $\nu:\mathbb P^1\to\mathbb P^1$ having branch points at $\infty,$  $-2\sqrt{\beta_0}$ and $+2\sqrt{\beta_0}$ 
with monodromy permutations $\sigma,$ $\tau^+$ and $\tau^-$ respectively, where $\sigma$ is an $m$-cycle while $\tau^\pm$ 
has cyclic structure $\Pi_{j=2}^m j^{d_j^{\pm}}.$ Moreover, the group $\langle\sigma,\,\tau^+,\,\tau^-\rangle$ is trivially transitive and, by the theory of 
coverings of $\mathbb P^1,$ we have that $\sigma\tau^+\tau^-=1.$ In fact, the class $\{\nu\Phi|\Phi:\mathbb P^1\to\mathbb P^1\,\textrm{automorphism}\,\}$ of degree $m$ coverings isomorphic to $\nu$
and with branch locus at $\infty,$  $-2\sqrt{\beta_0}$ and $+2\sqrt{\beta_0},$  is uniquely determined by the conjugacy class of the triple of permutations $(\tau^+,\tau^-,\sigma)$ (cf. e.g. \cite[Cor.~III.4.10]{mir}). Hence, for every $\beta_0\neq 0$ and for every fixed $(2m-2)$-tuple $(d_2^+,d_2^-,...,d_m^+,d_m^-)$, there exist at most finitely many polynomials $\nu(z)$ with the properties above, up to a change of variable. Moreover, a change of variable that transforms $\nu(z)$ in a polynomial with the same properties corresponds to an automorphism $\Phi: \PP^1 \to \PP^1$ such that  $\Phi(z)=az+b$ with $a^m=1$ and $b=0$, as  $\nu(z)$ is monic and has no $z^{m-1}$-term.

{\it We may now prove the non-emptiness in the general case}. Let $d_2,\ldots,d_m$  be non-negative integers satisfying \eqref{m-1}.
Then, no matter how we choose non-negative integers $d_i^{\pm}$, with  $d_i=d_i^+ +d_i^-$, 
we have that the $(2m-2)$-tuple
$\underline d= (d_2^+,d_2^-,\ldots, d_m^+,d_m^-)$ is admissible.  Furthermore, by Lemma \ref{tricicli}, there exist 
finitely many  triples of permutations $(\tau^+,\tau^-,\sigma)$ of $m$ indices such that 
$\tau^{\pm}$ has cyclic structure $\Pi_{j=2}^m j^{d_j^{\pm}},$ $\sigma$ is a cycle of order $m$ and  $\sigma \tau^+ \tau^-=1$.
Since the group $\langle\sigma,\tau^+,\tau^-\rangle$ is trivially transitive, again by the general theory of coverings of $\mathbb P^1,$ 
for every $\gamma\in\mathbb C$ there exists an $m$-covering $\nu_{(\underline d, (\tau^+,\tau^-,\sigma), \gamma)}:\mathbb P^1\to\mathbb P^1$ with branch points $\infty, -\gamma,+\gamma$ and monodromy
permutations $\sigma, \tau^+, \tau^-,$ respectively. Up to a change of variables in the domain, we may always assume
that $\nu$ is defined by a monic polynomial $\nu_{(\underline d, (\tau^+,\tau^-,\sigma), \gamma)}(z)$ with no $z^{m-1}$-term.
{\it This proves that the locus of points $(\underline\alpha,\beta_0)\in H_p$ such that the discriminant polynomial 
$\Delta(\underline\alpha,\beta_0)(z)$ has the desired properties is non-empty. We finally want to see that it has pure dimension $1$,
by writing its parametric equations explicitly. } We notice that, from what we observed above, the polynomials,
 $$ \nu_{(\underline d, (\tau^+,\tau^-,\sigma), \gamma)}(\frac{ z}{\zeta}),\,\, \rm{where}\,\,\zeta\,\,\rm{is\,an}\,\,m\rm{-th\, root\, of\, unity},$$
  are all polynomials with no $z^{m-1}$-term, and define
 an $m$-covering of $\mathbb P^1$ isomorphic to 
 $\nu_{(\underline d, (\tau^+,\tau^-,\sigma), \gamma)}$ (whose monodromy is conjugated with $(\tau^+,\tau^-,\sigma)$). More generally, if 
\begin{equation}\label{gamma=1}
\nu_{(\underline d, (\tau^+,\tau^-,\sigma), 1)}(z) =  z^m+\sum_{i=0}^{m-2}c_{i,(\underline d, (\tau^+,\tau^-,\sigma), 1)}z^{i},
\end{equation}
then, for every $\gamma\in\mathbb C\setminus 0,$   the polynomials

\begin{equation}\label{utile-parita}
\nu(z):=u^m\nu_{(\underline d, (\tau^+,\tau^-,\sigma), 1)}({z\over u})=z^m+\sum_{i=0}^{m-2}\alpha_iz^i,\,\,u^m=\gamma
\end{equation} 
are all the monic polynomials without $z^{m-1}$-term such that  $ \nu^2(z)-4\gamma^2$
has the same kind of multiple roots with the desired distribution, and defining an $m$-covering of $\mathbb P^1$
with monodromy in the conjugacy class of $(\tau^+,\tau^-,\sigma)$.
We have thus proved that the reduced and irreducible curve of parametric equations
\begin{equation}\label{parametric}
 \alpha_i= u^{m-i}c_{i,(\underline d, (\tau^+,\tau^-,\sigma), 1)},\,\mbox{for}\, \, i=0,1,\ldots,m-2,\, \,\, \mbox{and} \, \,\beta_0=\frac{u^{2m}}{4},
 \,\,u\in\mathbb C, 
\end{equation}
is an irreducible  component of $V_{\small 1^{d_2},\,2^{d_3},\ldots,\,{(m-1)}^{d_m}}\cap H_p.$
In particular, 
$$
V_{\small 1^{d_2},\,2^{d_3},\ldots,\,{(m-1)}^{d_m}}\cap H_p=B_1\cup\cdots\cup B_k
$$
is a reduced curve with $k$ irreducible components, where $k$ is the number of pairs $(\underline d, [\tau^+,\tau^-,\sigma])$, 
such that $\underline d= (d_2^+,d_2^-,\ldots, d_m^+,d_m^-)$ is an admissible $(2m-2)$-tuple  with $d_j^++d_j^-=d_j,$  for every $j,$
and
 $
[\tau^+,\tau^-,\sigma]=\{(\rho\tau^+\rho^{-1},\rho\tau^-\rho^{-1},\rho\sigma\rho^{-1})\,|\,\rho\in\mathfrak{S}_{m}  \},
$
where $\mathfrak{S}_{m}$ denotes the symmetric group of order $m$ and $\sigma$, $\tau^+$ and $\tau^-\in\mathfrak{S}_{m}$ are permutations
such that  $\sigma$ is an $m$ cycle, $\tau^\pm$ has cyclic structure $\Pi_{j=2}^m j^{d_j^{\pm}}$ and $\sigma\tau^+\tau^-=1.$
\end{proof}

Theorem \ref{prop:defcusps} is a local result. It describes all possible deformations 
of an $m$-tacnode of a reduced curve $C^\prime\in |D_0|$ at a double point $x$ of $\mathcal X_0$,
as precisely stated in the following.

\begin{corollary}\label{ingenerale}
Let $ C^\prime\subset |D_0|$ be any reduced curve with an $m$-tacnode  at a
point $x\in E$ and possibly further singularities. Then,  fixing coordinates $(\underline\alpha,\underline\beta)$ on $T^1_{C^\prime,x}$
as in \eqref{versal-family-m-tacnode}, we have that the image ${H^\prime_x}$ of the morphism 
$$
d\phi_{[C^\prime],x}:H^0(C^\prime,\N_{C^\prime|\mathcal X})\to T^1_{C^\prime,x}
$$
is contained in the linear space $H_x:\beta_1=\cdots=\beta_{m-1}=0.$ Moreover, it contains in codimension $1$ the image $\Gamma^\prime_x$
of $H^0(C^\prime,\N_{C^\prime|\mathcal X_0})$, which is in turn contained in $\Gamma_x$, with $\Gamma_x=T_{\underline 0}V_{1^m}:\beta_0=\beta_1=\cdots=\beta_{m-1}=0.$  
If $\Gamma^\prime_x=\Gamma_x$ then
$H^\prime_x=H_x$ and, for every $(m-1)$-tuple of integers
$d_2,\ldots,d_m$ such that $\sum_j(j-1)d_j=m-1,$ the intersection
$V_{\small 1^{d_2},\,2^{d_3},\ldots,\,{(m-1)}^{d_m}}\cap H_x^\prime$
is the curve given by \eqref{intersection}. In particular, 
there exist curves $C_t^\prime\in |D_t|$ with $d_j$ singularities of type $A_{j-1}$ at a neighborhood
of $x$, for every $j=2,...,m.$  
By \eqref{ker2}, a sufficient condition in order that $\Gamma^\prime_x=\Gamma_x$ (equivalently $H^\prime_x=H_x$)
is that $\dim(W_{x,m-1})=\dim(|D_0|)-m+1$. 
\end{corollary}
\begin{proof}
By hypothesis we have that $C^\prime=C_A^\prime\cup C_B^\prime$, with $C^\prime_A$ and $C^\prime_B$ smooth at $x$ and tangent to $E$ with multiplicity $m$.
Trivially, every deformation of $C^\prime$ in $\mathcal X_0$ has on $E$ tacnodes of order $m_i$ at points $p_i$ close to $x$ with $\sum_i m_i=m.$
In particular, every deformation of $C^\prime$ in $\mathcal X_0$ is equigeneric at $p$ \cite[Definition 3.13]{diaz_harris}. 
This implies that the image $\Gamma^\prime_x=d\phi_{[C^\prime],x}(H^0(C^\prime,\N_{C^\prime|\mathcal X_0})$ is contained in the tangent space 
$\Gamma_x=T_{\underline 0}V_{1^m}:\beta_0=\beta_1=...=\beta_{m-1}$ at $\underline 0$ of the locus $V_{1^m}$ of $m$-nodal curves.
By \eqref{kerneldifferenziale2} and the fact that
 $H^0(C^\prime,\N_{C^\prime|\mathcal X_0})$ is a codimension $1$ subspace $H^0(C^\prime,\N_{C^\prime|\mathcal X})$, 
we have that $\Gamma^\prime_x$ has codimension $1$ in $H_x^\prime.$ Moreover, if $\sigma\in H^0(C^\prime,\mathcal N_{C^\prime|\mathcal X})\setminus
H^0(C^\prime,\mathcal N_{C^\prime|\mathcal X_0})$ is the infinitesimal deformation given by \eqref{sigma}, where now 
$x+y+z^m=xy=0$ is the local equation of $C^\prime$ at $x$, then we have that the image point of $\sigma$ is contained in $H^\prime_x.$
Thus $H^\prime_x\subseteq H_x$, with $H_x:\beta_1=\cdots=\beta_{m-1}=0.$  The corollary follows now by
Proposition \ref{referee} and by versality, arguing as in the proof of Theorem \ref{prop:defcusps}.
\end{proof}


\begin{remark}\label{global-use}
Corollary \ref{ingenerale} is helpful if one wants to deform a reduced curve $C^\prime\in |D_0|$ with tacnodes at points of $E$
and further singularities, by smoothing together all singularities. In the next section, it will be applied to the case where
$\mathcal X_0$ is a reducible stable $K3$ surface and $C^\prime\subset\mathcal X_0$ is a reduced reducible curve with
nodes and tacnodes (at singular points of $\mathcal X_0$) as singularities. In \cite{galati-trip} the curve $C^\prime\subset\mathcal X_0$
is also allowed to have a space triple point at a point of $E$. In these applications the fact that the family 
$\mathcal X\to\mathbb A^1$ is a family of $K3$ surfaces is useful in the construction of the limit curve, but not in the deformation argument
of the curve, where only versality is used.

In order to help the reader understand the deformation argument in Theorem \ref{thm:maink3} (especially in the Case 1.4.2), we
want to point out the following difference of behavior between curve singularities at smooth points of $\mathcal X_0$ and at 
points of $E$. For curves on smooth surfaces we have Definition \ref{jacobian}. Assume now that the curve $C^\prime\subset\mathcal X_0$ 
has several tacnodes at points $p_1,...,p_r$ of $E$ of order $m_1,..,m_r$. Then
the natural map $$H^0(C^\prime,\N_{C^\prime|\mathcal X})\to \oplus_iH_{p_i}\subset \oplus_iT^1_{C^\prime,i}$$ is not surjective. Indeed, 
$\dim(\oplus_iH_i)=\sum_im_i$, but the kernel of the map has dimension $\geq h^0(C^\prime,\N_{C^\prime|\mathcal X})-\sum_im_i-r+1.$
This is not an obstruction to simultaneously deform  the tacnodes to desired singularities of type $A_k.$
 A sufficient condition in order to be able to smooth the tacnodes independently is that the linear system $W$ of curves in $|D_0|$ with an 
$(m_i-1)$-tacnode at $p_i$ for every $i$ has codimension $\sum_i(m_i-1)$ in $|D_0|$, as expected. Indeed, if this happens, then, by \eqref{ker2},
we have that $h^0(C^\prime,\N^\prime_{C^\prime|\mathcal X_0})=h^0(C^\prime,\N_{C^\prime|\mathcal X_0})-\sum_i (m_i-1)$.
Thus, by Corollary \ref{ingenerale}, the map  
$$H^0(C^\prime,\N_{C^\prime|\mathcal X_0})\to \oplus_i\Gamma_{p_i}$$
is surjective and hence the map $H^0(C^\prime,\N_{C^\prime|\mathcal X})\to H_{p_i}$ is a surjection for any $i$. This implies that tacnodes
may be deformed independently by arguing as in \cite[Lemma 4.4]{ch1} (see Case 1.4.2 of the proof of Theorem \ref{thm:maink3} for details).

We finally observe that the deformation argument of Theorem \ref{thm:maink3} also works if the limit curve has singularities different than nodes
on the smooth locus of $\mathcal X_0$, as long as 
$H^0(C^\prime, N^\prime_{C^\prime|\mathcal X})$ has the "expected dimension".
\end{remark}

\begin{remark}\label{parita'}
The curve \eqref{intersection} in Proposition \ref{referee} has nicer geometric properties in several cases.
Consider the case $d_j=0$ for every $j\geq 3,$ i.e. deformations of the $m$-tacnode to curves with $m-1$ nodes. Then,
by  \cite[Section 2.4]{ch}, there is only one possible conjugacy class $[\tau^+,\tau^-,\sigma]$ and 
the curve \eqref{intersection} is smooth and irreducible. Similarly, in the case $d_m=1$ and $d_j=0$ for every $j\leq m-1$,
the curve in $H_p\subset T^1_{C,p}$ parametrizing deformations of the $m$-tacnode  to an $A_{m-1}$-singularity
is defined by \eqref{A_{m-1}}. In particular it is smooth and irreducible.
This is no longer true in general. 

In the special case  that $m=2l+1$ is odd and $d_j^+=d_j^-$ for every $j,$ the parametric
equations \eqref{parametric} of \eqref{intersection} have a simpler feature, since the polynomial \eqref{gamma=1} is odd. 
Indeed, under the hypotheses, 
the polynomial  $- \nu_{(\underline d, (\tau^+,\tau^-,\sigma), 1)}(-z)$
is monic and with no $(m-1)$-term and defines an $m$-covering of $\mathbb P^1$ with monodromy in the conjugacy
class $(\tau^+,\tau^-,\sigma)$. Moreover, the discriminant polynomial $(-\nu_{(\underline d, (\tau^+,\tau^-,\sigma), 1)}(-z))^2-4$
has $d_j$ roots of multiplicity $j$, distributed as $d_j^+$ roots of $\nu_{(\underline d, (\tau^+,\tau^-,\sigma), 1)}(-z)+1$
and $d_j^-$ roots of $\nu_{(\underline d, (\tau^+,\tau^-,\sigma), 1)}(-z)-1,$ for every $j,$ and no further multiple roots.
By uniqueness, there exists $\zeta\in\mathbb C$ with $\zeta^m=1$ such that 
 $$-\nu_{(\underline d, (\tau^+,\tau^-,\sigma), 1)}(-z)=\nu_{(\underline d, (\tau^+,\tau^-,\sigma), 1)}(\zeta z),$$
 from which we deduce that $c_{i,(\underline d, (\tau^+,\tau^-,\sigma), 1)}=0,$ for every even $i$ and $\zeta=1.$
Then the equations \eqref{parametric} become 
 \begin{equation}\label{dispari}
 \begin{matrix}
\alpha_0 &=&0,&&\alpha_1=t^lc_{1,(\underline d, (\tau^+,\tau^-,\sigma), 1)},\\
\alpha_2 &=&0,&&\alpha_3=t^{l-1}c_{3,(\underline d, (\tau^+,\tau^-,\sigma), 1)}, \\
\vdots&&\vdots&&\vdots\\
\alpha_{m-2}&=& 0&\mbox{and}&\beta_0= {t^m\over 4},\,t=u^2\in\mathbb C.
\end{matrix}
\end{equation}

\end{remark}

\begin{remark}[Multiplicities and base changes]\label{multiplicity}
In the same way as in \cite[Section 1]{galati} for families of curves with only nodes and ordinary  
cusps, it is possible to define a relative Severi-Enriques variety  
$\mathcal V_{D,\small1^{d_2},\,2^{d_3},\ldots,\,{(m-1)}^{d_m}}^{\mathcal X|\mathbb A^1}\subset \mathcal H$ 
in the relative Hilbert scheme $\mathcal H,$
whose general fibre is the variety 
$\mathcal V_{D_t,\small1^{d_2},\,2^{d_3},\ldots,\,{(m-1)}^{d_m}}^{\mathcal X_t}\subset |D_t|=\mathcal H_t.$ 
Theorem \ref{prop:defcusps} proves that, whenever its hypotheses 
are verified, the locus of locally trivial deformations $W_{E,m}\subset |D_0|$ of $C$ in $|D_0|$
is one of the irreducible components of the special fibre $\mathcal V_0$ of 
$\mathcal V_{D,\small1^{d_2},\,2^{d_3},\ldots,\,{(m-1)}^{d_m}}^{\mathcal X|\mathbb A^1}\to\mathbb A^1.$ 

The multiplicity $m_C$ of $W_{E,m},$ as irreducible component of $\mathcal V_0,$ 
coincides with the intersection multiplicity at $\underline 0$ of the curve 
$$
V_{\small 1^{d_2},\,2^{d_3},\ldots,\,{(m-1)}^{d_m}}\cap H_p=B_1\cup\cdots\cup B_k
$$
with the linear space $d\phi_p(H^0(C,\N_{C|\mathcal X_0})):\beta_0=\beta_1=\cdots=\beta_{m-1}=0.$ 

Furthermore, the minimum of all intersection multiplicities at $\underline 0$ of the irreducible components $B_i$ with   
$d\phi_p(H^0(C,\N_{C|\mathcal X_0}))$ is the geometric multiplicity $m_g$ of $C$ defined in 
\cite[Problem 1 and Definition 1]{galati}. In particular, $m_g\leq m_C$. When $d_2=m-1$ and $d_j=0$ for
$j\neq 2,$ we know by \cite{ch} that $k=1$ and $B_1$ is smooth at $\underline 0$ and tangent to 
$d\phi_p(H^0(C,\N_{C|\mathcal X_0})$ with multiplicity $m_C=m.$

By Corollary \ref{ingenerale} and Remark \ref{global-use}, a similar result holds for reduced curves $C^\prime\in |D_0|$
with tacnodes on $E$ and singularities of type $A_k$ on the smooth locus of $\mathcal X_0,$ as long as 
$H^0(C^\prime, N^\prime_{C^\prime|\mathcal X})$ has the "expected dimension". In this case the multiplicity $m_{C^\prime}$
may be computed as in \cite[Lemma 4.4]{ch1}.

We finally observe that it is an abuse of terminology to say that the 
$m$-tacnodal curve $C\subset\mathcal X_0$ deforms to a curve 
$C_t\in|\mathcal O_{X_t}(D_t)|$ with $d_j$ singularities of type $A_{j-1}$ for every $j$. 
This is true only up to a base change. More precisely, let 
\begin{displaymath}
\xymatrix{
 \tilde{\mathcal X} \ar[d]\ar[r] & \mathcal X\ar[d]\\
\mathbb A^1\ar[r]^{\nu_{m_g}}&\mathbb A^1 }
\end{displaymath}
be the family of surfaces obtained from $\mathcal X\to\mathbb A^1$ by a 
base change of order $m_g$.  Observe that $\tilde{\mathcal X}$ is an $m_g$-cover
of $\mathcal X$ totally ramified along the central fibre. In particular, by substituting $\nu_{m_g}(u)=u^{m_g}=t$ in the local equation $xy=t$ of $\mathcal X$
at a point $p\in E=A\cap B,$ one finds that $\tilde{\mathcal X}$ is singular exactly along the 
singular locus $\tilde E\simeq E$ of the central fibre $\tilde{\mathcal X}_0\simeq\mathcal X_0.$  By blowing-up $m_g-1$ times $\tilde X$
along $E$ one obtains a family of surfaces
\begin{displaymath}
\xymatrix{
 \mathcal X^\prime \ar[d]\ar[r]^h & \mathcal X\ar[d]\\
\mathbb A^1\ar[r]^{\nu_{m_g}}&\mathbb A^1 }
\end{displaymath}
with smooth total space, having general fibre $\mathcal X^\prime_u\simeq \tilde{\mathcal X}_u\simeq \mathcal X_{u^m}$,
and whose central fibre has a decomposition into irreducible components 
$\mathcal X^\prime_0=\mathcal E_0\cup \mathcal E_1\cup\cdots\cup \mathcal E_{m_g-1}\cup \mathcal E_m$
where $\mathcal E_0=A$, $\mathcal E_m=B$ and $\mathcal E_i$ is a $\mathbb P^1$-bundle on the curve $\mathcal E_i\cap \mathcal E_{i-1}\simeq E$, for every $1\leq i\leq m-1$.
In this new family of surfaces $ \mathcal X^\prime\to\mathbb A^1$, we have that the pull-back curve $h^*(C)\in |\mathcal O_{\mathcal X_0^\prime}(h^*(D))|$
of $C$ deforms to a curve $C_u\in |\mathcal O_{\mathcal X_u}(h^*(D))|$ 
with the wished singularities. Equivalently, the divisor $m_gC\in |\mathcal O_{\mathcal X_0}(m_gD_0)|$ deforms to a reduced
curve in $ |\mathcal O_{\mathcal X_t}(m_gD_t)|$ having $m_g$ irreducible components, each
of which is a curve with the desired singularities.


\end{remark}

\begin{example}\label{m=5}
In the previous remark, take $m=5,$ $d_3=2$ and $d_j=0,$ for $j\neq 3.$ Then $m_C=m_g=5.$
Roughly speaking, if the curve $C$ in Theorem \ref{prop:defcusps} has a $5$-tacnode at the point $p\in E\subset\mathcal X_0$,
 then it appears as limit of curves with two ordinary cusps on $\mathcal X_t$ with multiplicity $5.$
\end{example}
\begin{proof}
If $m=5,$ $d_3=2$ and $d_j=0$ for $j\neq 3,$ then  $d_3^+=d_3^-=1$
and $(\tau^+,\tau^-,\sigma)=((123)(345)(654321))$, up to conjugation.
In particular, $V_{\small \,2^{2}}\cap H_p$ is an irreducible curve whose parametric equations are given by 
the equation \eqref{dispari} in Remark \ref{parita'}. In particular it intersects $\beta_0=\beta_1=\cdots=\beta_5=0$ with multiplicity $5$
at $\underline 0$.
We finally want to prove that  $V_{\small \,2^{2}}\cap H_p$ is smooth at $\underline 0$, by explicitly computing
\eqref{dispari}. The following argument has been suggested to us by
the referee. Let $\nu(z)$ be a degree $5$ polynomial with no $z^4$-term. If we require
$$\nu(z)=1+(z-a)^3(z-a^\prime)(z-a^{\prime\prime})=1+(z-b)^3(z-b^\prime)(z-b^{\prime\prime}),$$
by solving the corresponding polynomial equations, we obtain that
\begin{itemize}
\item $a$ is any root of $8x^5-3$, 
\item $a^\prime $ and $a^{\prime\prime} $ are the two roots of $3x^2+9a x + 8a^2$,
\item $b=-a$, $b^\prime=-a^\prime$ and $b^{\prime\prime}=-a^{\prime\prime}$. 
\end{itemize}
Thus we find that $\nu(z)=1+\frac{1}{3}(z-a)^3(3z^2+9a z + 8a^2)=z^5-\frac{10a^2}{3}z^3+5a^4z$.
In particular, the polynomial $\nu(z)$ is odd, as expected, and the equations \eqref{dispari} become
\begin{equation}\label{duecuspidi}
\alpha_0 =\alpha_2=0,\,\alpha_1=5a^4t^2,\,a_3=-\frac{10a^2}{3}t\,
\mbox{and}\,\beta_0= {t^5\over 4},\,t\in\mathbb C,
\end{equation}
with $a$ any fixed $5th$ root of $\frac{3}{8}$, proving smoothness. 
\end{proof}

The corollary below follows directly from equations \eqref{A_{m-1}} and it is an easy generalization of \cite[Lemmas 2 and 6]{galati}.

\begin{corollary}\label{d_{m}=1}
Independently of $m,$ if in the Remark \ref{multiplicity} we have that $d_{m}=1$ and $d_j=0$ for $j\neq m,$ then $m_C=m_g=2.$ 
Roughly speaking, the $m$-tacnodal curve $C$ in Theorem \ref{prop:defcusps} appears as limit of an $A_{m-1}$-singularity on
$\mathcal X_t$ with multiplicity $2.$ 
\end{corollary}

As already observed in Remark \ref{multiplicity}, Examples \ref{m=5} and Corollary \ref{d_{m}=1} may be generalized to curves 
$C^\prime\subset \mathcal X_0$ with several tacnodes on $E$ and singularities $A_k$ on the smooth locus of $\mathcal X_0$, 
under the hypothesis that $H^0(C^\prime,\N^\prime_{C^\prime|\mathcal X_0})$
has the expected dimension. But computing the multiplicity $m_{C^\prime}$ in the general case is a non-easy exercise.

\section{An application to general $K3$ surfaces }\label{S:EXIST}
This section is devoted to the proof of Theorem \ref{thm:maink3}. We also point out several corollaries of it.
 The degeneration argument we will use has been introduced in \cite{clm} and also used in \cite{C}. 
In the following $(S,H)$ will denote a general primitively polarized $K3$ surface of genus ${\p}={\p}_a(H).$
We will show  the existence of curves on $S$ with $A_k$-singularities
as an application of Theorem \ref{prop:defcusps}, more precisely of Corollary \ref{ingenerale}. In particular we will study deformations
of suitable curves with tacnodes and nodes on the union of two ad hoc constructed rational normal scrolls, cf. Remark \ref{global-use}.
We point out that our argument is strongly inspired by the one in \cite{C} but is not the same.

Let ${\p}=2l+\varepsilon\geq 3$ be an integer with $\varepsilon=0,1$ and $l \geq 1$ and
let $E \subset \PP^{{\p}}$ be an elliptic normal curve of degree ${\p}+1$. 
Consider two line bundles $L_1, L_2 \in \Pic^2(E) $ with $L_1\neq L_2$.
We denote by $R_1$ and $R_2$ the rational normal scrolls of degree ${\p}-1$ in
$\PP^{\p}$ generated by the secants of the divisors in $|L_1|$ and $|L_2|$, respectively. We have that
\[ R_i \cong  
\begin{cases} 
\PP^1 \times \PP^1 &  \; \mbox{if ${\p}=2l+1$ is odd and $\O_E(1) \not \sim (l+1)L_i$},  \\ 
 \mathbb{F}_1 &  \; \mbox{if ${\p}=2l$ is even},\\ 
\mathbb{F}_2 &  \; \mbox{if ${\p}=2l+1$ is odd and $\O_E(1) \sim (l+1)L_i$}.
\end{cases} 
\] 
We will only need to consider the first two cases. In the first case, where $R_1 \cong R_2\cong\PP^1 \times \PP^1,$ 
we let $\sigma_i$ and $F_i$ be the classes of the two rulings of $R_i$, for $i=1,2$.
In the second case, where $R_1 \cong R_2 \cong \mathbb{F}_1$, we let $\sigma_i$ be the section of negative self-intersection 
and $F_i$ be the class of a fiber. Then the embedding of $R_i$ into $\PP^{{\p}}$ is given by the line bundle 
$\sigma_i+lF_i$ for $i=1,2$ and
$R_1$ and  $R_2$ intersect transversally along the curve $E \sim -K_{R_i} \sim 2\sigma_i+(3-\varepsilon)F_i,$ which  
 is anticanonical in each $R_i$ (cf. \cite[Lemma 1]{clm}). In particular, $R:=R_1\cup R_2$ is a variety with normal 
 crossings and, by \cite[Section 2]{F}, we have that the first cotangent bundle
$T^1_R\simeq \mathcal N_{E|R_1}\otimes\mathcal N_{E|R_2}$ of $R$ is a line bundle on $E$ of degree $16.$
Let now $\U_{{\p}}$ 
 be the component of the Hilbert scheme of $\PP^{{\p}}$ 
containing $R$. Then we have that $\dim(\U_{{\p}})={\p}^2+2{\p}+19$ and, by  \cite[Theorems 1 and 2]{clm}, 
 the general point $[S]\in \U_{{\p}}$ represents a smooth, projective $K3$ surface $S$ of degree 
 $2{\p}-2$ in $\PP^{{\p}}$ such that $\Pic S \cong \ZZ[\O_S(1)]=\ZZ[H].$ 

In the proof of Theorem \ref{thm:maink3} we will consider general deformations $\mathcal{S} \to T$ of $R=\mathcal S_0$ over a
one-dimensional disc $T$ contained in $\U_{{\p}}$. 
Now $\mathcal{S}$ is smooth except for $16$ rational double points $\xi_1, \ldots, \xi_{16}$ lying on
$E$; these are the zeroes  of the section of the first cotangent bundle
$T^1_R$ of $R$ that is the image by the natural map $H^0(R,\mathcal N_{R|\mathbb P^{\p}})\to H^0(R,T^1_R)$ of the first-order 
embedded deformation determined by $\mathcal S\to T$, cf. 
\cite[pp. 644-647]{clm}. Blowing-up $\mathcal S$ at these points and contracting the obtained exceptional 
components (all isomorphic to $\mathbb F_0$)
on $R_2,$ we get a smooth family of surfaces $\mathcal X\to T,$ such that $\mathcal X_t\simeq\mathcal S_t$ and 
$\mathcal X_0=R_1\cup \tilde{R_2},$ where $\tilde{R_2}$ is the blowing-up of $R_2$ at the points  $\xi_1, \ldots, \xi_{16},$
with new exceptional curves $E_1, \ldots,\,E_{16}.$ 


\begin{proof}[Proof of Theorem \ref{thm:maink3}.] Let ${\p}=2l+\epsilon\geq 3,$ with $\epsilon=0,1,$ let $E$ be a smooth elliptic 
curve and $n\geq 1$ an integer.

{\it Case 1.} We first prove the theorem under the assumption $(n,{\p})\neq (2,3), (2,4).$ The proof will be divided into $4$ steps.

\noindent {\it Step 1.1. We construct two suitable rational normal scrolls $R_1$ and $R_2.$}
Let $L_1 \neq L_2$ be two degree $2$ line bundles on $E$ such that
\begin{equation} \label{eq:propl}
(n-1)(l-1+\varepsilon)L_1 \sim (n-1)(l-1+\varepsilon)L_2. 
\end{equation}
Note that there is no requirement if $n=1$, and that the hypothesis ${\p}\geq 5$ if $n=2$  ensures that we can choose $L_1 \neq L_2$. 
Now fix any general point $r \in E$ and embed $E$ as an elliptic normal curve of degree ${\p}+1$ by the very ample line bundle
\begin{equation} \label{eq:sezip}
\O_E(1) := \Big(2n(l-1+\varepsilon)+3-\varepsilon\Big)r-(n-1)(l-1+\varepsilon)L_i.
\end{equation}
When ${\p}=2l+1$ is odd, the condition $\O_E(1) \not \sim (l+1)L_i$ is equivalent to $(nl+1)L_i \not \sim 2(nl+1)r$, which is certainly verified for a general point $r$. 
Hence, letting $R_1$ and $R_2$ be the two rational normal scrolls in $\mathbb P^{\p}$ spanned by $L_1$ and $L_2$ as above, we have that
 $R_1 \cong R_2 \cong \PP^1 \times \PP^1$ when $\p=2l+1$ is odd and  $R_1 \cong R_2 \cong \mathbb F_1$ when $\p=2l$ is even. 

\noindent {\it Step 1.2. We next construct a special curve $C\subset R=R_1\cup R_2$, inspired by \cite{C}.}
Using the notation above, let $M_i$ be the divisor on $R_i$ defined by $M_i=\sigma_i+[n(l-1+\varepsilon)+1-\varepsilon]F_i.$  
By \eqref{eq:sezip} and the fact that ${L_i}|_{E} \sim {F_i}|_{E}$, we have that $[2n(l-1+\varepsilon)+3-\varepsilon]r \in |\O_{E}(M_i)|.$
Since
$H^0(R_i,\O_{R_i}(M_i)) 
\cong H^0(E,\O_{E}(M_i))$
we deduce that
there exists a unique (necessarily smooth and irreducible) curve $C_n^{i}\subset R_i$ such that
\begin{equation}\label{34}
  C_{n}^{i} \in \Big|M_i\Big|
\,\,\,\,
\textrm{and}\,\,\,\,
 C_{n}^{i}\cap E = \Big(2n(l-1+\varepsilon)+3-\varepsilon\Big)r,
\end{equation}
for both $i=1,2.$ Now  we fix a general point $q_0 \in E$ and we denote by
$H$ the hyperplane class of $\mathbb P^{\p}$. If $n=1$ then $M_i\sim C_{1}^{i}\sim H$ on $R_i$ and $q_0\notin  C_{1}^{i}$
for $i=1,2.$ More generally,  by \eqref{eq:propl}, \eqref{eq:sezip} and \eqref{34}, we have that 
\begin{eqnarray*}
(nH-C^1_n)|_E&\sim& 
(nH-C^2_n)|_E,
\end{eqnarray*}
for every $n\geq 1$. In particular, if $n\geq 2$,  there exists a curve $D^1\cup D^2\subset R$ with
$D^i=\cup_{j=1}^{n-1}C^i_j\subset R_i,$ where every $C_j^{i} \in |\sigma_i+(1-\varepsilon)F_i|$  
is a (necessarily  smooth and irreducible) curve on $R_i$, such that 
\[
C_{j}^{1}\cap E = q_{2j-2}+(2-\varepsilon)q_{2j-1} \; \; \mbox{and} \; \; 
C_{j}^{2}\cap E = (2-\varepsilon)q_{2j-1}+q_{2j},
\]
with $i=1,2$ and $1 \leq j \leq n-1,$ and where $q_1,q_2,\ldots,q_{2n-2}=q_0$ are distinct points on $E.$ 
Notice that the curve $D^1\cup D^2\subset R$ is uniquely determined by $q_0$ if ${\p}$ is odd, while for ${\p}$
even there are finitely many curves like $D^1\cup D^2$.
Now, for $i=1,2$, let $C^i\in |\mathcal O_{R_i}(nH)|$ be the curve defined by
 \begin{equation*}
 C^i=\left\{\begin{array}{l}
 C^i_1\,\,\mbox{if}\,\, n=1,\\
D^i\cup C^i_n= C_{1}^{i}\cup C_{2}^{i}\cup \cdots \cup
C_{n-1}^{i}\cup C_{n}^{i}\,\,\mbox{if}\,\, n\geq 2.
\end{array}\right.
\end{equation*}
 Observe that, if $n\geq 2$, because of the generality of $q_0,$ we may assume that all irreducible components $C_{j}^{i}$ of $D^i$  intersect
$C^i_n$ transversally for $i=1,2.$ In particular, we have that the singularities of $C:=C^1\cup C^2\in |\mathcal O_{R}(nH)|$
consist of a $({\p}+1)$-tacnode at $r\in E$ if $n=1$, and are given by nodes on $R\setminus E$ and nodes and tacnodes on $E$ if $n\geq 2$.

\noindent {\it Step 1.3. We now construct a general deformation  $\mathcal{S} \to T$ of $R,$ whose general fibre $\mathcal S_t$ is a 
smooth projective K3 surface, and a smooth birational modification of it
\begin{equation}
\xymatrix{
\mathcal X  \ar[dr]\ar[r]^\pi & \mathcal S\ar[d]\\
& T}
\end{equation} as above}. Let $\xi_1+\cdots+\xi_{16}\in |T^1_R|$ be a general divisor 
if $n=1$ and a general divisor such that 
$\xi_1=q_0$ and $\xi_l\neq q_m,$ for every $l\geq 2$ and $m\geq 1,$ if $n\geq 2.$ By the generality of $q_0,$
we have that $\xi_1+\cdots+\xi_{16}$ is a general member of $|T^1_R|$ also for $n\geq 2.$ By the surjectivity of
the natural map $H^0(R,\mathcal N_{R|\mathbb P^{{\p}}})\to H^0(R,T^1_R)$ (see \cite[Cor.~1]{clm}),
by \cite[Theorems 1 and 2]{clm} and related references (precisely, \cite[Remark 2.6]{F}
and \cite[Section 2]{GH}), we deduce that there exists a deformation $\mathcal{S} \to T$ of $\mathcal S_0=R$ whose general fiber
is a smooth projective  $K3$ surface $\mathcal S_t$ in $\mathbb P^{{\p}}$ with 
$\Pic (\mathcal S_t) \cong \mathbb Z [\mathcal O _{\mathcal S_t}(1)]\cong \mathbb Z [H]$ and 
such that $\mathcal S$ is singular exactly at the
 points $\xi_1\ldots,\xi_{16}\in E.$ Let $\mathcal X\to T$ be the smooth family obtained from $\mathcal{S} \to T$ 
 as above and $\pi:\mathcal X\to\mathcal S$ the induced birational morphism.  We recall that it has special fibre 
 $\mathcal X_0=R^1\cup \tilde{R^2}$, where $ \tilde{R^2}$ is the blowing up of $R_2$ at $\xi_1\ldots,\xi_{16}$
 and  $R_1\cap \tilde{R^2}=E$.
 
\noindent {\it Step 1.4. Let $\tilde C$ and $\pi^*(C)$ be the strict transform  and the pull-back of $C$ with respect to  
the natural morphism $\pi:\mathcal X\to\mathcal S.$ Using the ideas developed in Section 3, we will prove that $\pi^*(C)$ 
deforms into a family of curves in the $\mathcal X_t$'s
enjoying the required properties.} 

In the case $n=1$, the result is a straightforward application of Theorem \ref{prop:defcusps}.
Indeed, for $n=1$, by the generality of $r\in E$, the curve $\tilde C\simeq\pi^*(C)$ is a $({\p}+1)$-tacnodal curve
satisfying all  hypotheses of Theorem \ref{prop:defcusps}. 

Assume now that $n\geq 2$. In this case $\pi^*(C)=\tilde C\cup E_{q_0}$, where $E_{q_0}=E_1\subset\tilde{R_2}$ is
the $(-1)$-curve corresponding to $q_0=\xi_1.$  By abusing notation, we denote every irreducible component of $\tilde C$
as the corresponding irreducible component of $C.$ In particular, we set $\tilde C= C_n^1\cup D^1\cup C_n^2\cup D^2$
and $D^i=\cup_{j=1}^{n-1} C^i_j,$ for every $i$. The singularities of $\pi^*(C)$ on $\mathcal X_0\setminus E$ are given by the 
singularities of the strict transform $\tilde C$ of $C$ and a further node at $x_0=E_{q_0}\cap \tilde C.$ We want to 
obtain curves in $|\mathcal O_{\mathcal X_t}(nH_t)|$ with the desired singularities as deformations of 
$\pi^*(C).$ We first observe that
every deformation of $\pi^*(C)$ in  $|\mathcal O_{\mathcal X_t}(nH_t)|$ is an irreducible curve. 
This may easily be verified using that the divisor $H_t$ generates $\Pic(\mathcal X_t).$ 
In particular, no matter how we deform $\pi^*(C)$ to a curve on $\mathcal X_t,$ at least one node  of $\pi^*(C)$
on $(C^1_n\cup C^2_n)\cap (D^1\cup D^2)\cap (R\setminus E)$ must be smoothed. Moreover 
the smoothed node may be chosen arbitrarily, as will be clear by the following argument.
Let $q\in (C^1_n\cup C^2_n)\cap (D^1\cup D^2)\cap (\mathcal X_0\setminus E)$ be any fixed point and consider the 
natural morphism
$$
\Phi:H^0(\pi^*(C),\N_{\pi^*(C)|\mathcal X})\to T=
\oplus_{x\,\,\textrm{tacnode}}T^1_{\pi^*(C),x}\oplus_{y\neq q\,\,\textrm{node on}\,\, \mathcal X_0\setminus E}T^1_{\pi^*(C),y}
$$
obtained by composing the morphism $H^0(\pi^*(C),\N_{\pi^*(C)|\mathcal X})\to H^0(\pi^*(C),T^1_{\pi^*(C)})$ with
the projection $H^0(\pi^*(C),T^1_{\pi^*(C)})\to T.$ By Remark \ref{A_k} and Lemma \ref {lemma-ker}, the kernel of $\Phi$
is contained in the subspace $r_{\pi^*(C)}(\mathcal W)\subset H^0(\pi^*(C),\N_{\pi^*(C)|\mathcal X_0})$,
where 
$r_{\pi^*(C)}:H^0(\mathcal X_0,\mathcal O_{\mathcal X_0}(\pi^*(C)))\to H^0(\pi^*(C),\mathcal N_{\pi^*(C)|\mathcal X_0})$ is the restriction map 
and $\mathcal W\subset H^0(\mathcal X_0,\mathcal O_{\mathcal X_0}(\pi^*(C)))$ is the subspace associated to the
 linear system $W\subset |\mathcal O_{\mathcal X_0}(\pi^*(C))|$ of curves passing through every node $y\neq q$
of $\pi^*(C)$ on $\mathcal X_0\setminus E$ and having an $(m-1)$-tacnode
at every $m$-tacnode of $\pi^*(C)$. 

We now want to show that
\begin{equation}\label{nuova}
\ker(\Phi)=r_{\pi^*(C)}(\mathcal W)=H^0(\pi^*(C),\mathcal N^\prime_{\pi^*(C)|\mathcal X_0})=H^0(\pi^*(C),\mathcal N^\prime_{\pi^*(C)|\mathcal X})=\{0\}.
\end{equation}
The equality $H^0(\pi^*(C),\mathcal N^\prime_{\pi^*(C)|\mathcal X_0})=H^0(\pi^*(C),\mathcal N^\prime_{\pi^*(C)|\mathcal X})$
follows from Lemma \ref{lemma-ker}. Moreover, we observe that, in order to prove \eqref{nuova}, it is enough to show that
the linear system $W$ consists of the unique curve $W=\{\pi^*(C)\}$. Indeed, if this last equality is
true, then $\ker(\Phi)=r_{\pi^*(C)}(\mathcal W)=\{0\}.$ Moreover the equality $W=\{\pi^*(C)\}$ also implies that
every curve $D$ in $W$ contains the point $q$, too. In other words, every infinitesimal deformation of $\pi^*(C)$ in $\mathcal X_0$ preserving every tacnode 
and the nodes $y$ different from $q$ on $\mathcal X_0\setminus E$, also preserves $q$. Using that the nodes of $\pi^*(C)$ on $E$ are trivially
preserved by every section of $H^0(\pi^*(C),\N_{\pi^*(C)|\mathcal X_0}),$ we have that 
$r_{\pi^*(C)}(\mathcal W)\subset H^0(\pi^*(C),\mathcal N^\prime_{\pi^*(C)|\mathcal X_0}).$ Since the other inclusion holds trivially,
we obtain \eqref{nuova}. 

Now the equality $W=\{\pi^*(C)\}$  is a straighforward application of Bezout's theorem. 
Let $B=B_1\cup B_2$, where $B_1=B|_{R_1}$ and $B_2=B|_{\tilde{R_2}}$, be a curve in the linear system $W$.
Assume that $q\in\tilde{R_2}$. The other case is similar. We first observe that the intersection number $B_i.C_n^i$ is given by 
$B_i.C_n^i=n^2(l-1+\varepsilon)+nl.$ Moreover, by the hypothesis $B\in W$, we have that the intersection multiplicity of $B_1$ and $C_n^1$ at $r$ is given by 
${\rm{mult}}_r(B_1,C_n^1)=2n(l-1+\varepsilon)+2-\varepsilon$. Furthermore the intersection $B_1\cap C^1_n$ contains the intersection points
$C^1_n\cap C^1_j$, for $1\leq j\leq n-1.$ We deduce that the cardinality of the intersection $B_1\cap C^1_n$ is at least equal to
$$
2n(l-1+\varepsilon)+2-\varepsilon+(n-1)M_i\big(\sigma_i+(\varepsilon-1)F_i\big)= n^2(l-1+\varepsilon)+nl+1.
$$
Thus, by Bezout's Theorem, $C_n^1\subset B_1.$ Since $B$ is a Cartier divisor, it follows that the intersection multiplicity of $B_2$ 
with $E$ at $r$ is given by ${\rm{mult}}_r(B_2,E)={\rm{mult}}_r(B_1,E)\geq {\rm{mult}}_r(C_n^1,E)=2n(l-1+\epsilon)+3-\epsilon$. Moreover $B_2$ contains
the points, different from $q$, arising from the intersection of $C_n^2\cap C_j^2$, with $j\leq n-1$. Using again Bezout's theorem,
we find that $C_n^2\subset B_2$ and, in particular $q\in B.$ It remains to prove that $C^i_j\subset B^i$, for $i=1,2$ and $j\leq n-1.$ 
We observe that, if $j\leq n-1$, then $B_i.C^i_j=nl.$ Now consider the intersection $B_2\cap C_{n-1}^2$. It contains the point $q_{2n-3}$
with multiplicity $1-\epsilon$; the point $x_0$; the $(n-2)(1-\epsilon)$ points arising from the intersection $C^2_j\cap C_{n-1}^2$, for $j\leq n-2$;
the intersection points $C^2_{n-2}\cap C_n^2$. This amounts to a total of $nl+1$ points. Thus, $C_{n-1}^2\subset B_2$. 
Similarly we have that $E_{q_0}\subset B_2$. Then $B_1$ passes from the further points $q_0=q_{2n-2}=E_{q_0}\cap E$ and $q_{2n-3}$ with 
multiplicity $2-\epsilon.$ This implies by Bezout that $C_{n-1}^1\subset B_1$. Applying this argument $2(n-2)$ more times,
one obtains that $B=\pi^*(C)$ and thus  $\Phi$ is injective. 

The rest of the proof will be divided according to the parity of $\p.$

\noindent {\it Case 1.4.1: ${\p}=2l+1$ is odd. } Then all singularities of $\pi^*(C)$ are nodes except for the point $r\in E$ that is a
 $(2nl+2)$-tacnode.  Moreover, by Corollary \ref{ingenerale} (and using the
 notation therein), the image of  $\Phi$ is contained in the linear space 
 $$T^\prime=H_r \oplus_{y\neq q\,\,\textrm{node on}\,\, R\setminus E}T^1_{\pi^*(C),y}\subset T.$$
 As $\Phi$ is injective, the image of $\Phi$ has dimension 
$h^0(\pi^*(C),\N_{\pi^*(C)|\mathcal X})=2n^2l+2$. Since 
$\dim(H_r)=2nl+2$ by Corollary \ref{ingenerale} and  
 the curve $\pi^*(C)$ has exactly $2nl(n-1)$ nodes on $R\setminus E$ different from $q$, the image of $\Phi$ must coincide with $T^\prime.$ 
 Again by Corollary \ref{ingenerale} and by versality, we deduce that the curve $\pi^*(C)$ may be deformed to a curve $C_t\in |\mathcal O_{\mathcal X_t}(nH_t)|$
 preserving all nodes of $\pi^*(C)$ on $R\setminus E$ except $q$ and deforming the $(2nl+2)$-tacnode
 to $d_k$ singularities of type $A_{k-1},$ for every sequence $(d_k)$ of non-negative integers such that $\sum_k(k-1)d_k=2nl+1.$
Moreover, by the fact that $\ker(\Phi)=H^0(\pi^*(C),\mathcal N^\prime_{\pi^*(C)|\mathcal X})=\{0\},$ 
we obtain that the family of curves $C_t\in |\mathcal O_{\mathcal X_t}(nH_t)|$ constructed in this way is, scheme theoretically, a generically 
smooth curve $\mathcal B\subset\mathcal H_{\mathcal X|\mathbb A^1}$ in the relative Hilbert scheme. 
 By the openness of versality (more precisely, by the properties \cite[(3.5), (3.6)]{diaz_harris} of versal deformation families), if $[C_t]\in\mathcal B$ is a general point,
 then $T_{[C_t]}\mathcal B\simeq H^0(C_t,\N^\prime_{C_t|\mathcal X})$. In particular we obtain that
$$\dim(T_{[C_t]}ES(C_t))=h^0(C_t,\N^\prime_{C_t|\mathcal X_0})=h^0(C_t,\N^\prime_{C_t|\mathcal X})-1=0.$$
 This proves the theorem in the case ${\p}$ is odd. 
 
 {\it Case 1.4.2: ${\p}=2l$ is even.} In this case $\pi^*(C)$ has a $(2n(l-1)+3)$-tacnode at $r,$ a $2$-tacnode
 at $q_{2j-1},$ for every $j=1,\ldots,n-1,$ and nodes elsewhere. In particular, $\pi^*(C)$ has $2(n-1)(nl-n+1)+(n-1)(n-2)$ nodes
 on $R\setminus E$ different from $q$. Again by Corollary \ref{ingenerale} 
(and using the notation therein),  
 $\Phi(H^0(\pi^*(C),\N_{\pi^*(C)|\mathcal X}))$ is contained in the linear subspace 
 $$ T^\prime=H_r\oplus_{j=1}^{n-1}H_{q_{2j-1}}\oplus_{y\neq q\,\,\textrm{node on}\,\, R\setminus E}T^1_{\pi^*(C),y}$$
 of $T.$ In this case the image of $\Phi$ does not coincide with $T^\prime$ unless $n=1.$ For $n=1$ the theorem follows as before.
 If $n>1$ we observe that  $\Phi(H^0(\pi^*(C),\N_{\pi^*(C)|\mathcal X}))$ contains 
$\Phi(H^0(\pi^*(C),\N_{\pi^*(C)|\mathcal X_0}))$
as a codimension $1$ subspace. Moreover, by Corollary \ref {ingenerale} and a straightforward
 dimension count, we have that 
$$\Phi(H^0(\pi^*(C),\N_{\pi^*(C)|\mathcal X_0}))=\Gamma_r\oplus_{j=1}^{n-1}\Gamma_{q_{2j-1}}\oplus_{y\neq q\,\,\textrm{node on}\,\, R\setminus E}T^1_{\pi^*(C),y},$$
 where $\Gamma_r\subset H_r$ is the locus of $(2n(l-1)+3)$-nodal curves and $\Gamma_{q_{2j-1}}\subset H_{q_{2j-1}}$
 is the locus of $2$-nodal curves, for every $j=1,\ldots,n-1,$ cf. Remark \ref{remark-im}.  It follows that 
$$\Phi(H^0(\pi^*(C),\N_{\pi^*(C)|\mathcal X}))=\Omega\oplus_{y\neq q\,\,\textrm{node on}\,\, R\setminus E}T^1_{\pi^*(C),y},$$ 
where $\Omega\subset H_r\oplus_{j=1}^{n-1}H_{q_{2j-1}}$ is a linear subspace containing $\Gamma_r\oplus_{j=1}^{n-1}\Gamma_{q_{2j-1}}$ as a codimension $1$ subspace. 
Moreover, again by Corollary \ref {ingenerale} and the surjectivity of the map $H^0(\pi^*(C),\N_{\pi^*(C)|\mathcal X_0}))\to\Gamma_{x}$, with $x=r$ or $x=q_{2j-1}$,
we have that the projection maps
$$\rho_r:\Omega\to H_r\,\,\mbox{and}\,\,\rho_{q_{2j-1}}:\Omega\to H_{q_{2j-1}}$$ are surjective, for every $j$.
By \cite[Section 2.4]{ch}, the locus of $1$-nodal curves in $H_{q_{2j-1}}$ is a smooth curve simply tangent to $\Gamma_{q_{2j-1}}$ at $\underline 0.$ 
Let $(d_2,\ldots,d_m)$ be any $(m-1)$-tuple of non-negative
 integers such that $\sum_{k=2}^m(k-1)d_k=2n(l-1)+2.$  By Proposition \ref{referee} again, the locus  
 $V_{\small 1^{d_2},\,2^{d_3},\ldots,\,{(m-1)}^{d_m}}\subset H_r$ of points corresponding
 to curves with $d_k$ singularities of type $A_{k-1},$ for every $k,$ is a reduced (possibly
 reducible) curve intersecting $\Gamma_r$ only at $\underline 0.$ It follows that the locus  of curves in $\Omega$ with $d_2+n-1$ nodes and 
 $d_k$ singularities of type $A_{k-1}$ for every $k\geq 3$   
   is a reduced (possibly reducible) curve. Parametric equations of this curve may be explicitly 
   computed for selected values of $d_2,\ldots, d_m$ (see for example Remark \ref{parita'}, Example \ref{m=5} and Corollary \ref{d_{m}=1}) 
   by arguing exactly as in \cite[proof of Lemma 4.4, p.~381-382]{ch1}.
  By versality, the curve $\pi^*(C)$ may be deformed to a curve $C_t\in |\mathcal O_{\mathcal X_t}(nH_t)|,$
 preserving all nodes of $\pi^*(C)$ on $R\setminus E$ except $q$ and  deforming 
 every simple tacnode of $\pi^*(C)$ to a node and the $(2n(l-1)+3)$-tacnode at $r$ to 
 $d_k$ singularities of type $A_{k-1}$. As before, by the fact that $h^0(\pi^*(C),\N^\prime_{\pi^*(C)|\mathcal X})=0$
 and properties \cite[(3.5), (3.6)]{diaz_harris} of versal deformation families, we obtain that 
 $\dim(T_{[C_t]}ES(C_t))=0,$ for a general $t.$
 
{\it Case 2.}  We finally consider the cases $(n,{\p})=(2,3)$ and $(2,4).$ 
 Let $E$ be a general elliptic normal curve of degree ${\p}+1$ in $\mathbb P^{{\p}}$ and $R_1=Q_1$
 and $R_2=Q_2$ be two general rational normal scrolls intersecting  transversally  along $E.$ 
 Let $X\to T$ be a one-parameter family of very general primitively polarized $K3$ surfaces with special fibre $X_0$
 and double points $p_1,\ldots,p_{16}\in E.$ Consider on $X_0:=Q_1\cup Q_2$
 the curve $\cup_{i=1}^2\cup_{j=1}^n C^i_j$ constructed in \cite[Section 3.2]{C}. The theorem
 follows in this case by studying deformations of this curve by the same techniques as before.
 Details are left to the reader.
  \end{proof}

In the case $n=1$, Theorem \ref{thm:maink3} proves that the variety $\mathcal V_{H,\small1^{d_2},\,2^{d_3},\ldots,\,{(m-1)}^{d_m}}^{S}$
is non-empty whenever it has non-negative expected dimension. The precise statement is the following.

\begin{corollary}\label{H}
Let $(S,H)$ be a general primitively polarized $K3$ surface of genus ${\p}={\p}_a(H).$ Then,
for every $(m-1)$-tuple of non-negative integers $d_2,\ldots,d_m$ such that
 \begin{equation}\label{optimal-existence}
\sum_{j=2}^m(j-1)d_j\leq\dim(|H|)=\p,
\end{equation}
there exist reduced irreducible curves $C$ in the linear system $|H|$ on $S$ 
 having $d_j$ singularities of type $A_{j-1}$ for every $j=2,\ldots,m,$ and no further singularities and
corresponding to regular points of their equisingular deformation locus $ES(C_t).$ Equivalently, 
$\dim(T_{[C_t]}ES(C_t))=\dim(|H|)-\sum_{j=2}^m(j-1)d_j.$ In particular, the variety $\mathcal V_{H,\small1^{d_2},\,2^{d_3},\ldots,\,{(m-1)}^{d_m}}^{S}$
is non-empty whenever \eqref{optimal-existence} is satisfyied.
\end{corollary}

The previous result is optimal for $g=0$, because, by \cite{C1}, we know that all the rational curves in $|H|$ are nodal
(and nodes are the worst expected singularities of a rational curve in $|H|$). 
Theorem \ref{thm:maink3}  also proves the existence of divisors in  $\mathcal V_{nH,1^\delta}^S$, parametrizing curves with a tacnode 
or a cusp and nodes, whenever they have non-negative expected dimension.
  
 \begin{corollary}\label{divisori}
Let $(S,H)$ be a general primitively polarized $K3$ surface of genus ${\p}={\p}_a(H)\geq 3$ and let
$\delta\leq\p -1$. 
Then the Severi variety $\mathcal V_{nH,1^\delta}^S$ of reduced and irreducible $\delta$-nodal curves
 contains two non-empty generically smooth divisors $V_{tac}$ and $V_{c},$
whose general point of every irreducible component corresponds to a curve with a simple tacnode 
and $\delta-2$ nodes and an ordinary cusp and $\delta-1$ nodes, respectively.
In particular the varieties $\mathcal V_{nH,{1}^{\delta-2},3^1}^S$ and $\mathcal V_{nH,{1}^{\delta-1},2^1}^S$,
are non-empty. 
\end{corollary}

\begin{remark}
The existence of a further non-empty generically smooth divisor $V_{trip}\subset \mathcal V_{nH,1^\delta}^S$,
whose general element in every irreducible component corresponds to a curve with a triple point and $\delta-3$ nodes, has been
proved in \cite[Corollary 4.2]{galati-trip} under the assumption $(n,{\p})\neq (1,4)$. The case $(n,{\p})=(1,4)$ has been studied in 
\cite[Proposition 2.2]{galati-knutsen}. It is unknown if $\mathcal V_{nH,1^\delta}^S$ may contain divisors $W$ different from $V_{trip}$, $V_{tac}$
and $V_{c}$ and parametrizing curves with singularities different than nodes.
\end{remark}

We finally observe that Theorem \ref{thm:maink3} together with Proposition \ref{prop:regolarita-gen}  provide sufficient conditions
for the variety $\mathcal V_{nH,\small1^{d_2},\,2^{d_3},\ldots,\,{(m-1)}^{d_m}}^{S}$ to be non-empty and regular.
In the following remark we write explicitly the corresponding 
existence and regularity condition for $n=1$. The case $n\geq2$ is left to the reader. 

\begin{remark}\label{ex-reg-1}
 By Theorem \ref{thm:maink3} and Proposition \ref{prop:regolarita-gen} in the case $n=1$,
we have that, if $(S,H)$ is a general primitively polarized $K3$ surface of genus ${\p}$ and
 \begin{equation}\label{optimal-regularity}
\sum_{j=2}^m(j-1)d_j< \frac{\p+2}{2},
\end{equation}
then the variety $\mathcal V_{H,\small1^{d_2},\,2^{d_3},\ldots,\,{(m-1)}^{d_m}}^{S}\subset |H|$ is non-empty and regular (cf. Definition
\ref{definition-regularity} and Remark \ref{A_k}).
This condition of existence and regularity 
 is certainly an improvement of \eqref{regularity_keilen} but is not optimal. Indeed, by Mumford and Tannebaum \cite{MM} and \cite{tan},
  we know that the Severi variety of $\delta$-nodal curves in $|H|$ is non-empty and regular for every $\delta\leq\p.$
When \eqref{optimal-regularity} is not satisfied, the existence
of irreducible components $V\subset  \mathcal V_{H,\small1^{d_2},\,2^{d_3},\ldots,\,{(m-1)}^{d_m}}^{S}$ of dimension bigger that the expected
would imply the reducibility of the variety.
\end{remark}


\let\appendix\section
\begin{appendices}
\appendix{Proof of Lemma \ref{tricicli}}\label{tricicli_proof}  
The proof is by induction on $m$.

{\it Base case of the induction}. We first prove the lemma in the special case of an admissible $2(n-1)$-tuple satisfying 
$\sum_{j=2}^{n}d_j^-=1$ or $\sum_{j=2}^{n}d_j^+=1$. So assume, by symmetry, that $\sum_{j=2}^{n}d_j^-=1$. 
Then there is an index $i_0$ such that $d^-_{i_0}=1$ and $d^-_j=0$ for all $j \neq i_0$. The question is whether there is a permutation  $\tau^+$ of cyclic structure $\Pi_{j=2}^n j^{d_j^+}$ and a cycle $\tau^-=\sigma_{i_0}$ of order $i_0$ such that 
$\tau^+ \sigma_{i_0}$ is cyclic of order $m.$ Let $\sigma_{i_0}=(1 \; 2 \; \cdots \; i_0)$. 
By \eqref{eq:amm1}, we have $\sum_{j=2}^{n}(j-1)d_j^+=m-i_0$. 
This implies that we can construct a permutation $\tau^+$ of the desired cyclic structure such that each cycle contains precisely one integer in the set $\{1,2,\ldots,i_0\}$. It is then easily seen that $\tau^+ (1 \; 2 \; \cdots \; i_0)$ is an $m$-cycle.

{\it Induction step}. The base cases of the induction are all cases where 
$\sum_{j=2}^{n}d_j^-=1$ or $\sum_{j=2}^{n}d_j^+=1$, which have been treated above. Now let $(d_2^+,d_2^-,\ldots, d_n^+,d_n^-)$ be an admissible  $2(n-1)$-tuple such that both $\sum_{j=2}^{n}d_j^{\pm} \geq 2$. By symmetry we may assume that 
$\sum_{j=2}^{n}jd_j^{-}\geq \sum_{j=2}^{n}jd_j^{+}.$ Set $i_0:=\min\{j \; | \; d_j^{-}>0\}.$ We  claim that the $2(n-1)$-tuple
\[ ({d'}_2^+,{d'}_2^-,\ldots, {d'}_{i_0}^{+}, {d'}_{i_0}^{-}, \ldots, {d'}_n^+,{d'}_n^-)= 
(d_2^+,d_2^-,\ldots, d_{i_0}^{+}, d_{i_0}^{-}-1, \ldots, d_n^+,d_n^-) \]
is admissible. Indeed, set $m':= \sum_{j=2}^{n}(j-1)({d'}_j^++{d'}_j^-)+1= m-i_0+1.$
Then $m' \geq 2$ since $ \sum_{j=2}^{n}{d'}_j^+=\sum_{j=2}^{n}d_j^+ \geq 2.$
Clearly, 
\begin{equation}
  \label{eq:strict}
\displaystyle\sum_{j=2}^{n}j{d'}_j^{-} =\displaystyle\sum_{j=2}^{n}jd_j^{-} 
-i_0 \leq m-i_0< m-i_0+1=m' 
\end{equation}
by \eqref{eq:amm2}. Assume that
$\sum_{j=2}^{n}j{d'}_j^{+} =\sum_{j=2}^{n}jd_j^{+} > m'=m-i_0+1.$ Then we have that
\[ m-i_0+2 \leq \displaystyle\sum_{j=2}^{n}jd_j^{+} \leq \displaystyle\sum_{j=2}^{n}jd_j^{-},\,\,\,\textrm{whence} \]
\begin{eqnarray*}
  2m-2i_0+4 & \leq & \displaystyle\sum_{j=2}^{n}j(d_j^{+}+d_j^{-}) = 2\displaystyle\sum_{j=2}^{n}(j-1)(d_j^{+}+d_j^{-})- \displaystyle\sum_{j=2}^{n}(j-2)(d_j^{+}+d_j^{-})
\\ & = & 2(m-1) -\displaystyle\sum_{j=2}^{n}(j-2)(d_j^{+}+d_j^{-}),
\end{eqnarray*}
by \eqref{eq:amm1}. It follows that
$0 \leq \displaystyle\sum_{j=2}^{n}(j-2)d_j^{-} \leq \displaystyle\sum_{j=2}^{n}(j-2)(d_j^{+}+d_j^{-}) \leq 2i_0-6. $
In particular, we obtain that $i_0 \geq 3.$ Moreover, by definition of $i_0$,  we must have
$ 2(i_0-2) \leq (i_0-2) \displaystyle\sum_{j=2}^{n}d_j^{-} \leq \displaystyle\sum_{j=2}^{n}(j-2)d_j^{-} \leq 2i_0-6, $
getting a contradiction. Therefore, we have proved our claim that the $2(n-1)$-tuple  
\[ ({d'}_2^+,{d'}_2^-,\ldots, {d'}_{i_0}^{+}, {d'}_{i_0}^{-}, \ldots, {d'}_n^+,{d'}_n^-)= 
(d_2^+,d_2^-,\ldots, d_{i_0}^{+}, d_{i_0}^{-}-1, \ldots, d_n^+,d_n^-) \]
is admissible. 

 By induction, there exist permutations $\tau^{\pm}$ in the symmetric group
$\mathfrak{S}_{m-i_0+1}$ of order $m-i_0+1$ of cyclic structures $\Pi_{j=2}^n j^{{d'}_j^{\pm}}$, respectively, such that
\[ \tau^+ \tau^-= \Big(1 \; 2 \; \cdots (m-i_0+1)\Big). \]
The number of distinct integers from $\{1,2,\ldots,m-i_0+1\}$ appearing in the permutation 
$\tau^-$ is $\sum_{j=2}^{n}j{d'}_j^{-}$, which is less than $m-i_0+1$ by 
\eqref{eq:strict}. Hence there exists an $x \in \{1,2,\ldots,m-i_0+1\}$ not appearing in 
$\tau^-$. Then the permutation 
\[\alpha^-= \tau^- \Big( (m-i_0+2) \; (m-i_0+3) \; \cdots \;m\; x \Big) \]
has cyclic structure $\Pi_{j=2}^n j^{d_j^{-}}$
and
\[ 
\tau^+ \alpha^- = 
\Big(1 \; 2 \; \cdots  \; (m-i_0+1)\Big) \Big( (m-i_0+2)\; (m-i_0+3) \; \cdots \; m\; x \Big) 
\]
is cyclic of order $m$, as desired. \hfill$\square$

\begin{remark}
In general, given an admissible $(2n-2)$-tuple, we have several conjugacy classes of triples of permutations  satisfying Lemma \ref{tricicli}. For example, if $m=7$, $d_2^+=d_3^+=1,$  $d_4^-=1$ and $d_j^\pm=0$ otherwise, 
then the two triples $((267)(15),(1234),(1672345)^{-1} )$ and $((365)(17),(1234),(1256347)^{-1})$ satisfy Lemma \ref{tricicli}
and are not conjugated.
\end{remark}

\end{appendices}


{}
\end{document}